\newtheorem{claim}{Claim}[section]
\newtheorem{theorem}[claim]{Theorem}
\newtheorem{remark}[claim]{Remark}
\newcommand{\soutg}{\bgroup\markoverwith{\textcolor{green}{\rule[.5ex]{2pt}{1pt}}}\ULon}
\newcommand{\soutb}{\bgroup\markoverwith{\textcolor{blue}{\rule[.5ex]{2pt}{1pt}}}\ULon}
\newcommand{\soutr}{\bgroup\markoverwith{\textcolor{red}{\rule[.5ex]{2pt}{1pt}}}\ULon}
\title[Platonic solid graphs]{Spectral asymptotics of the Laplacian \\[.1em] on Platonic solids graphs}
\author{Pavel Exner}
\address{{Doppler Institute for Mathematical Physics and Applied Mathematics, Czech Technical University,
B\v rehov{\'a} 7, 11519 Prague, Czechia} {\rm and} {Department of Theoretical Physics, Nuclear Physics Institute, Czech Academy of Sciences, 25068 \v{R}e\v{z} near Prague, Czechia}
}
\email{exner@ujf.cas.cz}
\author{Ji\v{r}\'{\i} Lipovsk\'{y}}
\address{{Department of Physics, Faculty of Science, University of Hradec Kr\'alov\'e, Rokitansk\'eho 62,
500\,03 Hradec Kr\'alov\'e, Czechia} {\rm and} {Department of Theoretical Physics, Nuclear Physics Institute, Czech Academy of Sciences, 25068 \v{R}e\v{z} near Prague, Czechia}}
\email{jiri.lipovsky@uhk.cz}
 \date{\today}
\begin{document}

\begin{abstract}
We investigate the high-energy eigenvalue asymptotics quantum graphs consisting of the vertices and edges of the five Platonic solids considering two different types of the vertex coupling. One is the standard $\delta$-condition, the other is the preferred-orientation one introduced in \cite{ETa}. The aim is to provide another illustration of the fact that the asymptotic properties of the latter coupling are determined by the vertex parity by showing that the octahedron graph differs in this respect from the other four for which the edges at high energies effectively disconnect and the spectrum approaches the one of the Dirichlet Laplacian on an interval.
\end{abstract}

\maketitle

%%%%%%%%%%%%%%%
\section{Introduction}
%%%%%%%%%%%%%%%

The quantum graph concept was proposed in the early days of quantum mechanics by Linus Pauling \cite{Pa} as a model of aromatic hydrocarbon molecules, and soon it was forgotten and rediscovered again only in the 1980s. Since then it is an object of intense interest revealing a number of interesting effects; we refer to the monograph \cite{BK} and the extensive bibliography it contains. One of the characteristic features of quantum graphs is possibility to choose different \emph{vertex couplings}, i.e. the conditions matching the wavefunctions at the graph nodes due to the fact that the self-adjointness requirement leaves a considerable freedom in the choice of the coupling.

Not surprisingly, most often one encounters simple matching conditions such as the $\delta$-coupling that requires the continuity of the function values at the vertex and the sum of the outgoing derivatives there to be equal to an $\alpha$-multiple of the common function value for a fixed $\alpha\in\mathbb{R}$. For its particular case with $\alpha=0$ the -- not quite fortunate\footnote{Recall that Kirchhoff law in electricity means the current conservation and that \emph{any} self-adjoint quantum graph dynamics preserves the probability current at the vertices. Various alternative names such as free, standard, or natural were proposed but none received a common recognition.} -- name \emph{Kirchhoff} is used; recall that these conditions arise in the limit of the Neumann Laplacian supported by a network which collapses to a graph \cite{RS, EPo}. However, in some situations other boundary condition may be useful.

Recently quantum graphs (with the $\delta$-coupling) were used to model the anomalous Hall effect \cite{SK}. To get the result, the authors had to suppose that the electron motion on the graph loops has a prescribed orientation which is an assumption hard to justify from the first principles. On the other hand, the vertex coupling \emph{can} be non-invariant with respect to time reversal. A simple example was proposed in \cite{ETa} and its investigation revealed an interesting \emph{topological property} of such a vertex coupling, namely that the transport properties of the vertex at high energies depend substantially on the \emph{vertex parity}; this effect was illustrated in \cite{ETa} through comparison of the band spectra of square and hexagonal lattices.

The main aim of the present paper is to demonstrate the same effect in the discrete spectrum of finite quantum graphs. We investigate five simple graphs associated with the five Platonic solids assuming the full symmetry, i.e. the same coupling at every graph vertex. We compare the high-energy behavior of the eigenvalues in two cases, the $\delta$-coupling and the preferred-orientation one from \cite{ETa}. The symmetry allows us to simplify considerably the spectral analysis by splitting the operator into components in eigenspaces of the corresponding symmetry group representation. This method was employed before -- recall Naimark and Solomyak reduction of homogeneous trees \cite{NS} or the proof of Thm. 7.3 in \cite{DEL} -- and the present discussion provides another illustration of its usefulness. We also note that examination of Platonic solid graphs resonates with the original Pauling's motivation because such geometric structures can be found in nature, in the shape of molecules -- methane for the tetrahedron, cubane for the cube, $\mathrm{SF}_6$ for the octahedron, dodecahedrane for the dodecahedron, or boron for the icosahedron -- or viruses, etc.

The results confirm the expectation. The graphs with the $\delta$-coupling have several series of eigenvalue, the number of which increases with the graph complexity. Some eigenvalues are of the `single-edge' type and do not depend on the interaction strength $\alpha$, the others do but in the high-energy region they are close to the eigenvalues referring to the Kirchhoff coupling; to be more specific, their square roots approach those of the Kirchhoff eigenvalues with an $\mathcal{O}(n^{-1})$ error which mean that the distance between the eigenvalues and their Kirchhoff counterpart has an $n$-independent bound which is in good correspondence with the results of \cite{KS}.

The spectral picture of the graphs with the preferred-orientation coupling is different. The high-energy behavior singles out the octahedron which has four series of eigenvalues, two of which determined exactly, while the elements of the others have asymptotically periodic square roots. The spectra of the other four Platonic solid graphs are simpler and asymptotically coincide with the spectrum of the Dirichlet Laplacian on an interval representing a single edge. The reason of the difference comes from the above mentioned topological property. The octahedron graph has vertices of degree four and at high energies the particle has roughly the same probability of being reflected at the vertex or transmitted to an adjacent edge. Vertices of the other four graphs, on the other hand, have an odd degree, three or five, and as a consequence, the on-shell scattering matrix at the vertex approaches the identity matrix as $k\to\infty$. In other words, at high energies the particle is almost surely reflected and remains thus confined to a single edge.

%%%%%%%%%%%%%%%
\section{Description of the model}
%%%%%%%%%%%%%%%
Let us briefly recall a few basic notions about equilateral quantum graphs referring to the monograph \cite{BK} for more details. A finite equilateral metric graph consists of a family of vertices and $N$ finite edges connecting them according to a prescribed adjacency matrix; to parametrize the edges we suppose that they are homothetic to the interval $(0,1)$. The operators $H$ we are interested in act as the negative second derivative on the graph edges, with the domain consisting of functions in the Sobolev space $\bigoplus_{j=1}^N W^{2,2}((0,1))$ satisfying at the $j$-th vertex the coupling conditions $(U_j-I)\Psi_j+i(U_j+I)\Psi_j'=0$, where $U_j$ is a unitary coupling matrix, $I$ is an identity matrix, $\Psi_j$ and $\Psi_j'$ are the vectors of limits of function values and outgoing derivatives at the given vertex, respectively. Note that the graphs we consider are not oriented; if a vertex corresponds to $x=0$, we take $f'(0)$ as the entry of $\Psi_j'$, if it refers to $x=1$ we take $-f'(1)$ instead.

The graphs we discuss in the present paper have edges and vertices of the Platonic solids. As indicated in the introduction we are going to compare such graph Hamiltonians with two particular couplings types. The first one is the $\delta$-coupling, which requires the continuity of the function values at the vertex, and the sum of the outgoing derivatives to be equal equals to the $\alpha$-multiple of this common function value. We consider the symmetric situation when the coupling constant $\alpha$ is the same at all the vertices. The matrix $U_j$ is then independent of $j$ and has the form $U_j = \frac{2}{d_j+i\alpha}J-I$, where $d_j$ is the vertex degree and the matrix $J$ has all the entries equal to one. The second type, dubbed \emph{preferred-orientation}, was proposed with the participation of one of the authors in \cite{ETa} as an example of a coupling not invariant with respect to the time reversal. It exhibits a rotational behavior which is `maximal' at a particular energy, here assumed conventionally to refer to $k=1$, when the incoming wave at an edge passes to just one neighboring edge and this happens in cyclical manner. The corresponding matrix $U_j$, identified with the on-shell S-matrix $S(1)$, has the form
 % -------------- %
$$
  U_j = \begin{pmatrix}0 & 1 & 0 & 0 & \cdots & 0 & 0\\ 0 & 0 & 1 & 0 & \cdots & 0 & 0\\ 0 & 0 & 0 & 1 & \cdots & 0 & 0\\ \vdots & \vdots & \vdots & \vdots & \ddots & \vdots & \vdots\\ 0 & 0 & 0 & 0 & \cdots & 0 & 1\\ 1 & 0 & 0 & 0 & \cdots & 0 & 0\\ \end{pmatrix}\,.
$$
 % -------------- %
For both types of the coupling we focus our attention at the high-energy behavior, that is, the asymptotical positions of the eigenvalues of $H$; as usual, we shall work with their square roots $k$ and examine the limit for $k\to \infty$.

%%%%%%%%%%%%%%%
\section{The tetrahedron}
%%%%%%%%%%%%%%%

 % -------------- %
\begin{figure}
\centering
\includegraphics[height=5cm]{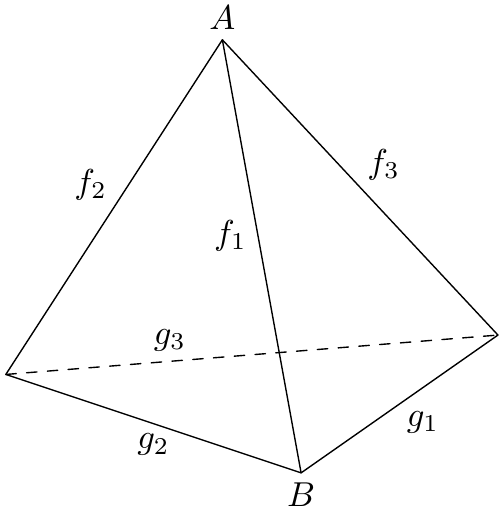}
\caption{A tetrahedron}
\label{fig1}
\end{figure}
 % -------------- %
Assume first the tetrahedron-shaped graph $\Gamma$ with all the edge lengths equal to one. Let us denote the wavefunction components by $f_1$, $f_2$, $f_3$, $g_1$, $g_2$, $g_3$ as indicated in~Figure~\ref{fig1}. We parametrize the edges by intervals $(0,1)$ in the following way. For $f_j$, $j=1,2,3$, the point $x=0$ is denoted by $A$. The point $B$ corresponds to $x=1$ for $f_1$, $x=0$ for $g_1$ and $x=1$ for $g_2$. We assume that the point $x=0$ for $g_3$ corresponds to $x=1$ for $g_1$. We consider an operator $T_3$ which acts on $\Gamma$ as a (clockwise, if viewed from the outside of the tetrahedron) rotation by $\frac{2\pi}{3}$ around the axis defined by the vertex $A$ and the center of the side opposite to it. Since $T_3^3 = \mathrm{Id}$, it has eigenvalues $\omega_j = \mathrm{e}^{\frac{2\pi i}{3}j}$, $j = 0,1,2$. We consider subspaces of eigenfunctions of the operator $T_3$, in which the following relations hold.
 % -------------- %
\begin{eqnarray}
  f_2(x) = \omega_j f_1(x)\,,\quad f_3(x) = \omega_j^2 f_1(x)\,,\label{eq:4sten01}\\
  g_2(x) = \omega_j g_1(x)\,,\quad g_3(x) = \omega_j^2 g_1(x)\,.\label{eq:4sten02}
\end{eqnarray}
 % -------------- %
We investigate separately the two types of the vertex coupling described above.

%%%%%%%%%%%%%%%
\subsection{$\delta$-condition}
%%%%%%%%%%%%%%%
We suppose that $\delta$-conditions of strength $\alpha$ are imposed at all the vertices. At the vertex $A$ we then have
 % -------------- %
\begin{equation}
  f_1(0) = f_2(0) = f_3(0)\,,\quad f_1'(0)+f_2'(0)+f_3'(0) = \alpha f_1(0)\,.\label{eq:4sten:delta01}
\end{equation}
 % -------------- %
For $j = 0$ we have $f_1(x) =f_2(x) = f_3(x)$, and thus
 % -------------- %
\begin{equation}
  f_1'(0) = \frac{\alpha}{3} f_1(0)\,.\label{eq:4sten:bc:1a}
\end{equation}
 % -------------- %
For $j = 1,2$, on the other hand, we have $f_2(0) = \omega_j f_1(0)$ and using the first equation in \eqref{eq:4sten:delta01} we obtain
 % -------------- %
\begin{equation}
  f_1(0) = 0\,.\label{eq:4sten:bc23a}
\end{equation}
 % -------------- %

At the vertex $B$ the coupling condition is
 % -------------- %
$$
  f_1(1) = g_1(0) = g_2(1)\,,\quad -f_1'(1)+g_1'(0)-g_2'(1)= \alpha g_1(0)\,,
$$
 % -------------- %
hence using \eqref{eq:4sten02} we can rewrite it in the form
 % -------------- %
\begin{equation}
  f_1(1) = g_1(0) = \omega_j g_1(1)\,,\quad -f_1'(1)+g_1'(0)-\omega_j g_1'(1) = \alpha g_1(0)\,,\quad j=0,1,2\,. \label{eq:4sten:bcb}
\end{equation}
 % -------------- %
We infer that the Hamiltonian on the tetrahedron is unitarily equivalent to the direct sum of three operators on the graph drawn in Figure~\ref{fig2}. In the first case, the boundary condition at the vertex $A$ is Robin \eqref{eq:4sten:bc:1a} and at the vertex $B$ we have the condition \eqref{eq:4sten:bcb} with $\omega_0 = 1$. In the second and third case, there is Dirichlet boundary condition \eqref{eq:4sten:bc23a} at $A$ and the coupling condition \eqref{eq:4sten:bcb} at $B$ with $\omega_1 = \mathrm{e}^{\frac{2\pi i}{3}}$ and $\omega_2 = \mathrm{e}^{\frac{4\pi i}{3}}$, respectively.

 % -------------- %
\begin{figure}
\centering
\includegraphics[height=3cm]{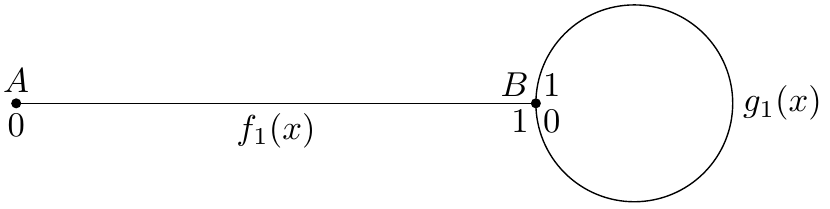}
\caption{The graph supporting component operators for the tetrahedron}
\label{fig2}
\end{figure}
 % -------------- %

Let us start by investigating the first named case. We use the Ansatz
 % -------------- %
$$
  f_1(x) = a\sin{kx}+b\cos{kx}\,,\quad g_1(x) = c\sin{kx}+d\cos{kx}\,,
$$
 % -------------- %
leading to the set of equations
 % -------------- %
\begin{eqnarray*}
ka= b\frac{\alpha}{3}\,,\\
a\sin{k}+b\cos{k} = d = c\sin{k}+d\cos{k}\,,\\
-ak\cos{k}+bk\sin{k}+ck-ck\cos{k}+dk\sin{k}=\alpha d\,,
\end{eqnarray*}
 % -------------- %
which can be rewritten as
 % -------------- %
\begin{eqnarray*}
  b\frac{\alpha}{3}\sin{k}+bk\cos{k}-dk = 0\,,\\
  c\sin{k}+d\cos{k}-d=0\,,\\
  -b\frac{\alpha}{3}\cos{k} + bk\sin{k} +ck(1-\cos{k})+d(k\sin{k}-\alpha) = 0\,.
\end{eqnarray*}
 % -------------- %
The condition of solvability of this system,
 % -------------- %
$$
  k^2 (2\cos{k}+3\sin^2{k}-2)+\alpha k \sin{k} \left(\frac{2}{3}-2\cos{k}\right) -\frac{1}{3}\alpha^2 \sin^2{k} = 0\,,
$$
 % -------------- %
can be cast into the form
 % -------------- %
$$
  \sin{\frac{k}{2}}\left[k^2\sin{\frac{k}{2}}\left(-1+3\cos^2{\frac{k}{2}}\right)+\frac{1}{3}\alpha k \cos{\frac{k}{2}}(1-3\cos{k})-\frac{1}{3}\alpha^2\sin{\frac{k}{2}}\cos^2{\frac{k}{2}}\right]
$$
 % -------------- %
The first factor gives the square roots of eigenvalues in the $k$-plane equal to $k = 2n\pi,\; n\in\mathbb{N}$. Regarding the square bracket as a polynomial in $k$ we note that the coefficients of the first and zeroth power are bounded, hence the remaining (square roots of) eigenvalues approach the numbers $k = 2\arccos{\left(\pm\frac{1}{\sqrt{3}}\right)}$ as $k\to \infty$; we observe that these values are exact in the Kirchhoff case, $\alpha=0$.

 % -------------- %
\begin{remark}
{\rm For the sake of brevity we will in what follows mostly speak of the values of $k$ as of eigenvalues having in mind that they are in reality the square roots of those. In this connection recall that the spectrum of our graph Hamiltonians need not be positive -- this is the case for the preferred-orientation coupling \cite{ETa} and for the $\delta$-coupling if $\alpha<0$, however, in this paper we are concerned with the high-energy asymptotic behavior only so we have always $k>0$.}
\end{remark}
 % -------------- %

In the second and third case, we obtain after using the Ansatz
 % -------------- %
$$
  f_1(x) = a\sin{kx}\,,\quad g_1(x) = c\sin{kx}+d\cos{kx}\,,
$$
 % -------------- %
the equations
 % -------------- %
\begin{eqnarray*}
  a\sin{k} = d = \omega_j (c\sin{k}+d\cos{k})\,,\\
  -ak \cos{k} + ck -\omega_j (ck \cos{k}-dk\sin{k}) = \alpha d\,.
\end{eqnarray*}
 % -------------- %
The solvability condition of this system, i.e. the secular equation, is
 % -------------- %
$$
  \sin{k}[k(\omega_j^2-3\omega_j\cos{k}+1)-\alpha \omega_j\sin{k}] = 0\,,
$$
 % -------------- %
which can be using $\omega_j^2+\omega_j + 1 = 0$ rewritten as
 % -------------- %
$$
\sin{k}\left[k(1+3\cos{k})+\alpha\sin{k}\right] = 0\,.
$$
 % -------------- %
One group of eigenvalues consists of $k = n\pi$, $n\in \mathbb{Z}$, the elements of the second one approach $k=\arccos{(-\frac{1}{3})}$ as $k\to \infty$, again coinciding with them for $\alpha=0$.

%%%%%%%%%%%%%%%
\subsection{Preferred-orientation coupling condition}
%%%%%%%%%%%%%%%

Let us next consider the vertex coupling which prefers one orientation as proposed in \cite{ETa}. In that case, at the point $A$ we have
 % -------------- %
$$
  \begin{pmatrix}-1 & 1 & 0\\ 0 & -1 & 1\\ 1 & 0 & -1\end{pmatrix}\begin{pmatrix}f_1(0)\\ f_2(0)\\ f_3(0)\end{pmatrix} + i  \begin{pmatrix}1 & 1 & 0\\ 0 & 1 & 1\\ 1 & 0 & 1\end{pmatrix}\begin{pmatrix}f_1'(0)\\ f_2'(0)\\ f_3'(0)\end{pmatrix} = 0\,.
$$
 % -------------- %
To be explicit, the orientation is chosen in the following way: if seen from the exterior of the tetrahedron, the entries of the vector of function values are arranged in the clockwise direction. For $j = 0$, this equation simplifies due to \eqref{eq:4sten01} to
 % -------------- %
\begin{equation}
  -f_1(0)+f_1(0)+i(f_1'(0)+f_1'(0)) = 0\,,\quad \Rightarrow \quad f_1'(0) = 0\,.\label{eq:4sten:et:0a}
\end{equation}
 % -------------- %
For $j =1$ and $j = 2$ we have
 % -------------- %
\begin{equation}
  -f_1(0)+\omega_j f_1(0) + i(f_1'(0)+\omega_j f_1'(0)) = 0\,,\quad \Rightarrow \quad f_1'(0) = i \frac{-1+\omega_j}{\omega_j+1} f_1(0) = (-1)^j\sqrt{3}f_1(0)\,.\label{eq:4sten:et:12a}
\end{equation}
 % -------------- %
At the vertex $B$ we have the coupling condition
 % -------------- %
$$
  \begin{pmatrix}-1 & 1 & 0\\ 0 & -1 & 1\\ 1 & 0 & -1\end{pmatrix}\begin{pmatrix}f_1(1)\\ g_1(0)\\ g_2(1)\end{pmatrix} + i  \begin{pmatrix}1 & 1 & 0\\ 0 & 1 & 1\\ 1 & 0 & 1\end{pmatrix}\begin{pmatrix}-f_1'(1)\\ g_1'(0)\\ -g_2'(1)\end{pmatrix}\,.
$$
 % -------------- %
Using \eqref{eq:4sten02} we get
 % -------------- %
\begin{equation}
  \begin{pmatrix}-1 & 1 & 0\\ 0 & -1 & 1\\ 1 & 0 & -1\end{pmatrix}\begin{pmatrix}f_1(1)\\ g_1(0)\\ \omega_j g_1(1)\end{pmatrix} + i  \begin{pmatrix}1 & 1 & 0\\ 0 & 1 & 1\\ 1 & 0 & 1\end{pmatrix}\begin{pmatrix}-f_1'(1)\\ g_1'(0)\\ -\omega_j g_1'(1)\end{pmatrix}\,. \label{eq:4sten:et:b}
\end{equation}
 % -------------- %
Again, the Hamiltonian on the tetrahedron is unitarily equivalent to direct sum of three operators on the graph depicted in Figure~\ref{fig2}, the first one has Neumann boundary condition \eqref{eq:4sten:et:0a} at $A$ and the condition~\eqref{eq:4sten:et:b} with $\omega_0 = 1$ at $B$. The other two have Robin boundary condition \eqref{eq:4sten:et:12a} at $A$ and the coupling \eqref{eq:4sten:et:b} at $B$, both with $\omega_1 = \mathrm{e}^{\frac{2\pi i}{3}}$ and  $\omega_2 = \mathrm{e}^{\frac{4\pi i}{3}}$, respectively.

To find the secular equation for the first partial graph, $j=0$, we use the Ansatz
 % -------------- %
$$
  f_1(x) = b\cos{kx}\,,\quad g_1(x) = c\sin{kx}+d\cos{kx}\,.
$$
 % -------------- %
The coupling conditions then yield
 % -------------- %
\begin{eqnarray*}
  -b\cos{k}+d+ik b \sin{k}+ ikc = 0\,,\\
  -d+c\sin{k}+d\cos{k}+ikc-ikc\cos{k}+ikd \sin{k} = 0\,,\\
  b\cos{k}-c\sin{k}-d\cos{k}+ikb \sin{k}-ikc \cos{k}+ikd\sin{k} = 0\,,
\end{eqnarray*}
 % -------------- %
and the secular equation is
 % -------------- %
$$
  -2ik (k^2\sin^2{k}+3\sin^2{k}+2\cos{k}-2) = 0\,,
$$
 % -------------- %
which can be rewritten as
 % -------------- %
\begin{equation}
  k\sin^2{\frac{k}{2}} \left(k^2\cos^2{\frac{k}{2}}+3\cos^2{\frac{k}{2}}-1\right) = 0\,.\label{eq:4sten:et:1g}
\end{equation}
 % -------------- %
One group of eigenvalues consists of $k = 2n\pi, \; n\in\mathbb{N}$, the elements of other approach the numbers $k = \pi +2n\pi$ as $k\to \infty$.

For the second and the third operator we use the Ansatz
 % -------------- %
$$
  f_1(x) = a\sin{kx} + b\cos{kx}\,,\quad g_1(x) = c\sin{kx}+d\cos{kx}\,.
$$
 % -------------- %
The coupling conditions then become
 % -------------- %
\begin{eqnarray*}
  ka = (-1)^j\sqrt{3} b\,,\\
  -a\sin{k}-b\cos{k}+d-ika\cos{k}+ikb\sin{k}+ikc = 0\,,\\
  -d+\omega_j c \sin{k}+\omega_j d \cos{k}+ ikc -i\omega_j k c\cos{k}+i\omega_j k d\sin{k} = 0\,,\\
  a\sin{k} + b\cos{k}-\omega_j c\sin{k}-\omega_j d\cos{k}-ika \cos{k}+ikb\sin{k}-i\omega_j ck \cos{k}+i \omega_j kd \sin{k} = 0\,,
\end{eqnarray*}
 % -------------- %
and the secular equation is
 % -------------- %
\begin{multline*}
  -2i k^4\omega_j \sin^2{k} + k^3[2i\sqrt{3}(-1)^j\omega_j \cos{k}\sin{k}-2\omega_j^2\sin{k}+2\sin{k}]+
\\
 + k^2[2\sqrt{3}(-1)^j\omega_j^2\cos{k}-2i\omega_j^2\cos{k}-6i\omega_j\sin^2{k}-2\sqrt{3}(-1)^j\cos{k}+4i\omega_j-2i\cos{k}]+
 \\
  +k [-2i\sqrt{3}(-1)^j\omega_j^2\sin{k}+6i\sqrt{3}(-1)^j\omega_j\cos{k}\sin{k}-2i\sqrt{3}(-1)^j\sin{k}] = 0
\end{multline*}
 % -------------- %
Using
 % -------------- %
$$
  \omega_j^2+1 = -\omega_j\,,\quad \omega_j^2-1 = \omega_j i \sqrt{3} (-1)^{j+1}
$$
 % -------------- %
we obtain
 % -------------- %
\begin{equation}
  k\sin{\frac{k}{2}}\left[k\sin{\frac{k}{2}}-(-1)^j\sqrt{3}\cos{\frac{k}{2}}\right]\left[k^2\cos^2{\frac{k}{2}}+3\cos^2{\frac{k}{2}}-1\right] = 0\,.\label{eq:4sten:et:23g}
\end{equation}
 % -------------- %
The first set of eigenvalues is $k = 2n\pi$, $n\in\mathbb{Z}$, the elements of the second approach the same values as $k\to \infty$, while the elements of the third approach $k = \pi+2n\pi$, $n\in\mathbb{Z}$, as $k \to \infty$.

We can also asses the convergence rate as stated in the following theorem:

 % -------------- %
\begin{theorem}
For the tetrahedron with the preferred-orientation coupling the (square roots of) eigenvalues $k$ are for large $k$ contained in the intervals
 % -------------- %
$$
  k \in \left(n\pi -\frac{2\sqrt{3}}{k}+\mathcal{O}\left(\frac{1}{k^2}\right),n\pi+\frac{2\sqrt{3}}{k}+\mathcal{O}\left(\frac{1}{k^2}\right)\right)\,,\quad n\in\mathbb{Z}\,.
$$
 % -------------- %
\end{theorem}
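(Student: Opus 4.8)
The plan is to read off the asymptotic positions of the eigenvalues from the secular equations \eqref{eq:4sten:et:1g} and \eqref{eq:4sten:et:23g} of the three component operators established above. Discarding the irrelevant overall factor $k$, the zeros are governed by three types of factor: $\sin\frac{k}{2}$ (squared in the $j=0$ case), the bracket $k\sin\frac{k}{2}-(-1)^j\sqrt3\cos\frac{k}{2}$ with $j\in\{1,2\}$, and the bracket $(k^2+3)\cos^2\frac{k}{2}-1$. I would treat each in turn, in each case first arguing that for $k$ large the roots are confined to a shrinking neighbourhood of a multiple of $\pi$ — so that the limiting spectrum is that of the Dirichlet Laplacian on a single edge, whose square roots of eigenvalues are exactly $n\pi$ — and then expanding to extract the $\mathcal{O}(1/k)$ correction.

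The factor $\sin\frac{k}{2}$ gives $k=2n\pi$ with zero shift. A root of $(k^2+3)\cos^2\frac{k}{2}-1$ satisfies $\cos^2\frac{k}{2}=(k^2+3)^{-1}\to0$, which forces $\frac{k}{2}$ towards $\frac{\pi}{2}+m\pi$; writing $k=(2m+1)\pi+\delta$ one has $\cos^2\frac{k}{2}=\sin^2\frac{\delta}{2}$, hence $|\delta|=\frac{2}{\sqrt{k^2+3}}+\mathcal{O}(k^{-3})=\frac{2}{k}+\mathcal{O}(k^{-3})$, and since $2<2\sqrt3$ these roots lie comfortably inside the claimed intervals centred at the odd multiples of $\pi$. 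For a root of $k\sin\frac{k}{2}-(-1)^j\sqrt3\cos\frac{k}{2}$ one has $\cos\frac{k}{2}\ne0$ for $k$ large (else $\sin\frac{k}{2}=\pm1$ would vanish too), so the equation is equivalent to $\tan\frac{k}{2}=\frac{(-1)^j\sqrt3}{k}$; the right-hand side tends to $0$, which pushes $\frac{k}{2}$ towards $m\pi$, and writing $k=2m\pi+\delta$ gives $\frac{\delta}{2}=\arctan\frac{(-1)^j\sqrt3}{k}=\frac{(-1)^j\sqrt3}{k}+\mathcal{O}(k^{-3})$, i.e. $k=2m\pi+\frac{2(-1)^j\sqrt3}{k}+\mathcal{O}(k^{-3})$. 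As $j$ runs through $1$ and $2$ these eigenvalues occupy both ends of the interval $\bigl(2m\pi-\frac{2\sqrt3}{k},\,2m\pi+\frac{2\sqrt3}{k}\bigr)$ up to an $\mathcal{O}(k^{-2})$ error, so it is this family that saturates the bound and produces the constant $2\sqrt3$.

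Collecting the three contributions — $k=2n\pi$ exactly, the $\cos$-type roots at distance $\frac{2}{k}+\mathcal{O}(k^{-3})$ from the odd multiples of $\pi$, and the $\tan$-type roots at distance $\frac{2\sqrt3}{k}+\mathcal{O}(k^{-3})$ from the even multiples — every square root of an eigenvalue lies, for $k$ large, within $\frac{2\sqrt3}{k}+\mathcal{O}(k^{-2})$ of some $n\pi$, which is the assertion; here one also uses $\frac{1}{k}=\frac{1}{n\pi}+\mathcal{O}(k^{-3})$, so the denominator in the leading term of the shift is immaterial. The only step that demands real care is the last, tightest family: one must check that the arctangent expansion genuinely yields the coefficient $2\sqrt3$ with an $\mathcal{O}(k^{-3})$ remainder (so that replacing $k$ by $n\pi$ there costs only $\mathcal{O}(k^{-2})$), and — via the size and monotonicity estimates used above — that no root of any of the three factors escapes the neighbourhoods of the points $n\pi$; everything else is routine Taylor expansion.
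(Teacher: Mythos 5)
Your proposal is correct and follows essentially the same route as the paper: factor the secular equations \eqref{eq:4sten:et:1g} and \eqref{eq:4sten:et:23g}, read off the exact roots $k=2n\pi$ from the $\sin\frac{k}{2}$ factors, and bound the remaining roots via $|\cos\frac{k}{2}|\le\mathrm{const}/k$ and $|\sin\frac{k}{2}|\le\sqrt{3}/k$, the latter producing the constant $2\sqrt{3}$. Your version is in fact slightly sharper (you track the sign of the shift and the $\mathcal{O}(k^{-3})$ remainders via the arctangent expansion, where the paper is content with one-sided inequalities), but the substance is identical.
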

 % -------------- %
\begin{proof}
For the first component operator, we have from \eqref{eq:4sten:et:1g}
 % -------------- %
$$
  \left|\cos{\frac{k}{2}}\right|\leq\frac{\sqrt{2}}{k}\quad \Rightarrow \quad |k-\pi-2n\pi|\leq \frac{2\sqrt{2}}{k} + \mathcal{O}\left(\frac{1}{k^2}\right)
$$
 % -------------- %
For the second and third one, we have either
 % -------------- %
$$
  \left|\sin{\frac{k}{2}}\right|\leq\frac{\sqrt{3}}{k}\quad \Rightarrow \quad |k-2n\pi|\leq \frac{2\sqrt{3}}{k} + \mathcal{O}\left(\frac{1}{k^2}\right)
$$
 % -------------- %
or
 % -------------- %
$$
  \left|\cos{\frac{k}{2}}\right|\leq\frac{\sqrt{2}}{k}\quad \Rightarrow \quad |k-\pi-2n\pi|\leq \frac{2\sqrt{2}}{k} + \mathcal{O}\left(\frac{1}{k^2}\right)
$$
 % -------------- %
which proves the claim.
\end{proof}

We see that in this case the high eigenvalues coincide with an $\mathcal{O}(n^{-1})$ error with those of the Dirichlet Laplacian on a unit-length interval.

%%%%%%%%%%%%%%%
\section{The cube}
%%%%%%%%%%%%%%%

 % -------------- %
\begin{figure}
\centering
\includegraphics[height=6cm]{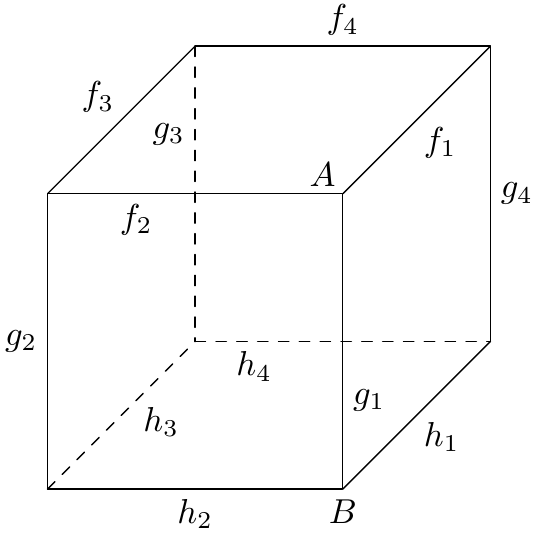}
\caption{A cube}
\label{fig3}
\end{figure}
 % -------------- %
Consider next a cube with unit length edges supporting a wavefunction with components described in Figure~\ref{fig3}. We employ the rotation operator $T_4$, this describing rotations by $\pi/4$ around the axis going through the centers of the upper and lower face. In the eigensubspaces of $T_4$ it holds
 % -------------- %
\begin{eqnarray}
  f_2(x) = \omega_j f_1(x)\,,\quad f_3(x) = \omega_j^2 f_1(x)\,,\quad f_4(x) = \omega_j^3 f_1(x)\,,\label{eq:cube:01}\\
  g_2(x) = \omega_j g_1(x)\,,\quad g_3(x) = \omega_j^2 g_1(x)\,,\quad g_4(x) = \omega_j^3 g_1(x)\,,\label{eq:cube:02}\\
  h_2(x) = \omega_j h_1(x)\,,\quad h_3(x) = \omega_j^2 h_1(x)\,,\quad h_4(x) = \omega_j^3 h_1(x)\label{eq:cube:03}
\end{eqnarray}
 % -------------- %
with $\omega_j = \mathrm{e}^{\frac{i\pi}{2}j}$, $j = 0, 1,2,3$. Using these subspaces we can find the direct sum decomposition of our operators is analogy with what we did above for the tetrahedron.

%%%%%%%%%%%%%%%
\subsection{$\delta$-condition}
%%%%%%%%%%%%%%%

We begin again with the $\delta$-coupling imposed at all the vertices. We use the parametrization of the edges by intervals $(0,1)$ such that for the vertex $A$ the corresponding values of $x$ are: $x=0$ for $f_1$, $x= 1$ for $f_2$ and $x=0$ for $g_1$; for the vertex $B$ we have $x=1$ for $g_1$, $x=0$ for $h_1$ and $x=1$ for $h_2$. The coupling condition at $A$ is
 % -------------- %
$$
  f_1(0) = g_1(0) = f_2(1)\,,\quad f_1'(0)+g_1'(0)-f_2'(1)=\alpha f_1(0)\,.
$$
 % -------------- %
which can be using \eqref{eq:cube:01} rewritten as
 % -------------- %
\begin{equation}
  f_1(0) = g_1(0) = \omega_j f_1(1)\,,\quad f_1'(0)+g_1'(0)-\omega_j f_1'(1)=\alpha f_1(0)\,.\label{eq:cube:delta:a}
\end{equation}
 % -------------- %
At the vertex $B$ we have
 % -------------- %
$$
  g_1(1) = h_1(0)= h_2(1)\,,\quad -g_1'(1)+h_1'(0)-h_2'(1) = \alpha h_1(0)\,.
$$
From \eqref{eq:cube:03} it follows
\begin{equation}
  g_1(1) = h_1(0)= \omega_j h_1(1)\,,\quad -g_1'(1)+h_1'(0)-\omega_j h_1'(1) = \alpha h_1(0)\,.\label{eq:cube:delta:b}
\end{equation}
 % -------------- %
We obtain four component operators supported by the graph showed in Figure~\ref{fig4} with the coupling conditions \eqref{eq:cube:delta:a} and \eqref{eq:cube:delta:b} with $\omega_j = \mathrm{e}^{\frac{i
\pi}{2}j}$.

 % -------------- %
\begin{figure}
\centering
\includegraphics[height=2.5cm]{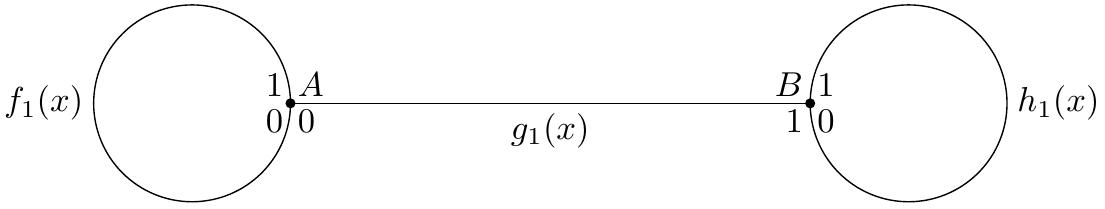}
\caption{The graph supporting component operators for the cube}
\label{fig4}
\end{figure}
 % -------------- %

To obtain the secular equation, we use the Ansatz
 % -------------- %
$$
  f_1(x) = a\sin{kx}+b\cos{kx}\,,\quad g_1(x) = c\sin{kx}+d\cos{kx}\,,\quad h_1(x) = e\sin{kx}+f\cos{kx}\,.
$$
 % -------------- %
Using \eqref{eq:cube:delta:a} and \eqref{eq:cube:delta:b} we obtain the set of equations
 % -------------- %
\begin{eqnarray*}
  b =d=\omega_j a\sin{k}+\omega_j b\cos{k}\,,\\
  ka+kc -\omega_j ka \cos{k}+\omega_j kb\sin{k} = \alpha b\,,\\
  c\sin{k}+d\cos{k} = f = \omega_je\sin{k}+\omega_j f \cos{k}\,,\\
  -ck\cos{k}+dk\sin{k}+ke-\omega_j ke \cos{k} +f\omega_j k \sin{k} = \alpha f\,.
\end{eqnarray*}
 % -------------- %
The secular equation is
 % -------------- %
\begin{multline*}
  \left[\sin{\frac{k}{2}}(k\omega_j^2+k-2k\omega_j\cos{k}-\alpha \omega_j \sin{k})-\omega_j k \sin{k}\cos{\frac{k}{2}}\right]\cdot
\\
\cdot
\left[\cos{\frac{k}{2}}(k\omega_j^2+k-2k\omega_j\cos{k}-\alpha\omega_j\sin{k})+\omega_j k \sin{k}\sin{\frac{k}{2}}\right] = 0\,,
\end{multline*}
 % -------------- %
where the first factor can be rewritten as
 % -------------- %
$$
  k\sin{\frac{k}{2}}\left[\omega_j^2+1-2\omega_j\left(3\cos^2{\frac{k}{2}}-1\right)\right]-\alpha\omega_j \sin{\frac{k}{2}}\sin{k} = 0\,,
$$
 % -------------- %
which leads to the eigenvalues $k = 2n\pi$, $n\in\mathbb{Z}$ and another series the elements of which approach for any nonzero $\alpha\in\mathbb{R}$ and large $k$ the solutions of the Kirchhoff case which can be determined analytically. Specifically, for $\omega_j^2 = -1$ we get $k = 2\arccos{\left(\pm \frac{\sqrt{3}}{3}\right)}$, i.e. $k\approx \pm 1.911+4n\pi$ or $k \approx \pm 4.373+4n\pi$, $n\in\mathbb{Z}$. Similarly for $\omega_j = 1$ we get $k = 2\arccos{\left(\pm \frac{\sqrt{6}}{3}\right)}$, i.e. $k\approx \pm 1.231+4n\pi$ or $k\approx \pm 5.052 + 4n\pi$, $n\in\mathbb{Z}$, and finally, for $\omega_j = -1$ we have $k = \pi +2n\pi$, $n\in\mathbb{Z}$.

The second factor at the left-hand side of the secular equation can be written as
 % -------------- %
$$
  k \cos{\frac{k}{2}}\left[\omega_j^2+1+2\omega_j\left(3\sin^2{\frac{k}{2}}-1\right)\right]-\alpha\omega_j\cos{\frac{k}{2}}\sin{k} = 0\,,
$$
 % -------------- %
which leads to eigenvalues with $k = \pi +2n\pi$ and the others that accumulate at the following values: for $\omega_j^2 = -1$ it is $k = 2\arcsin{\left(\pm \frac{\sqrt{3}}{3}\right)}$, i.e. $k\approx \pm 1.231 +2n\pi$, $n\in\mathbb{Z}$, for $\omega_j = -1$ we have $k = 2\arcsin{\left(\pm \frac{\sqrt{6}}{3}\right)}$, i.e. $k\approx \pm 1.911+2n\pi$, $n\in\mathbb{Z}$, and finally, for $\omega_j = 1$ we have $k = 2n\pi$, $n\in\mathbb{Z}$.

This description can be simplified by comparing the above indicated values modulo $\pi$: we have the series of $k = n\pi$, $n\in \mathbb{Z}$, with multiplicity five and another two that approach the values $k\approx \pm 1.231 + n\pi$ (here the multiplicity is three). We also remark that in the cube case one can obtain this result alternatively using the mirror symmetries and examining the three-prong stars with the $\delta$-coupling at the vertex and the Dirichlet/Neumann conditions at the edges endpoints.

%%%%%%%%%%%%%%%
\subsection{Preferred-orientation coupling}
%%%%%%%%%%%%%%%

Using the same wavefunction notation as above, assuming the rotation is in the clockwise direction when viewed from the exterior of the cube. At the vertex $A$ we have
 % -------------- %
$$
  \begin{pmatrix}-1 & 1 &0 \\ 0 & -1 & 1\\ 1 & 0 & -1\end{pmatrix}\begin{pmatrix}f_1(0) \\ g_1(0) \\ f_2(1)\end{pmatrix} + i \begin{pmatrix}1 & 1 &0 \\ 0 & 1 & 1\\ 1 & 0 & 1\end{pmatrix}\begin{pmatrix}f_1'(0) \\ g_1'(0) \\ -f_2'(1)\end{pmatrix} =0 \,.
$$
 % -------------- %
From \eqref{eq:cube:01} we get
 % -------------- %
\begin{equation}
  \begin{pmatrix}-1 & 1 &0 \\ 0 & -1 & 1\\ 1 & 0 & -1\end{pmatrix}\begin{pmatrix}f_1(0) \\ g_1(0) \\ \omega_j f_1(1)\end{pmatrix} + i \begin{pmatrix}1 & 1 &0 \\ 0 & 1 & 1\\ 1 & 0 & 1\end{pmatrix}\begin{pmatrix}f_1'(0) \\ g_1'(0) \\ -\omega_j f_1'(1)\end{pmatrix} =0 \,.\label{eq:cube:et:a}
\end{equation}
 % -------------- %
Furthermore, at the vertex $B$ we have the condition
 % -------------- %
$$
  \begin{pmatrix}-1 & 1 &0 \\ 0 & -1 & 1\\ 1 & 0 & -1\end{pmatrix}\begin{pmatrix} g_1(1) \\ h_1(0)\\ h_2(1)\end{pmatrix} + i \begin{pmatrix}1 & 1 &0 \\ 0 & 1 & 1\\ 1 & 0 & 1\end{pmatrix}\begin{pmatrix}-g_1'(1) \\ h_1'(0)\\ -h_2'(1)\end{pmatrix} =0 \,.
$$
 % -------------- %
Using \eqref{eq:cube:03} we obtain
 % -------------- %
\begin{equation}
  \begin{pmatrix}-1 & 1 &0 \\ 0 & -1 & 1\\ 1 & 0 & -1\end{pmatrix}\begin{pmatrix} g_1(1) \\ h_1(0)\\ \omega_j h_1(1)\end{pmatrix} + i \begin{pmatrix}1 & 1 &0 \\ 0 & 1 & 1\\ 1 & 0 & 1\end{pmatrix}\begin{pmatrix}-g_1'(1) \\ h_1'(0)\\ -\omega_j h_1'(1)\end{pmatrix} =0 \,.\label{eq:cube:et:b}
\end{equation}
 % -------------- %
There are four component operators on the graph of Figure~\ref{fig4} with the coupling conditions \eqref{eq:cube:et:a} and \eqref{eq:cube:et:b}. We again use the Ansatz
 % -------------- %
$$
  f_1(x) = a\sin{kx}+b\cos{kx}\,,\quad g_1(x) = c\sin{kx}+d\cos{kx}\,,\quad h_1(x) = e\sin{kx}+f\cos{kx}
$$
 % -------------- %
which leads to the equations
 % -------------- %
\begin{eqnarray*}
  -b+d+ika+ikc=0\,,\\
  -d+\omega_j a \sin{k}+\omega_j b \cos{k}+ikc-i\omega_j k a \cos{k}+i \omega_j k b \sin{k} = 0\,,\\
  b-\omega_j a \sin{k} -\omega_j b \cos{k}+ika -i \omega_j ka \cos{k}+i\omega_j kb \sin{k} = 0\,,\\
  -c\sin{k}-d\cos{k}+f-ikc \cos{k}+ikd\sin{k}+ike = 0\,,\\
  -f+\omega_j e \sin{k}+\omega_j f \cos{k}+ike -ik\omega_j e \cos{k}+ik\omega_j f \sin{k}=0\,,\\
  c\sin{k}+d\cos{k}-\omega_j e\sin{k}-\omega_j f \cos{k}-ik c \cos{k}+ikd \sin{k}-ik\omega_j e\cos{k}+ik \omega_j f \sin{k}=0\,.
\end{eqnarray*}
 % -------------- %
The secular equation is then
 % -------------- %
\begin{multline*}
  k^6 \omega_j^2\sin^3{k} + k^4 \sin{k} (\omega_j^4+2\omega_j^3\cos{k}-6\omega_j^2\cos^2{k}+2\omega_j \cos{k}+1) +
\\
 +k^2 \sin{k}(-\omega_j^4+6\omega_j^3\cos{k}-9\omega_j^2\cos^2{k}-\omega_j^2+6\omega_j\cos{k}-1) = 0
\end{multline*}
 % -------------- %
and we arrive at the following theorem:

 % -------------- %
\begin{theorem}
For the cube with the preferred-orientation coupling the (square roots of) eigenvalues $k$ are for large $k$ in the intervals
 % -------------- %
$$
  k \in \left(n\pi -\frac{2\sqrt{3}}{k}+\mathcal{O}\left(\frac{1}{k^2}\right),n\pi+\frac{2\sqrt{3}}{k}+\mathcal{O}\left(\frac{1}{k^2}\right)\right)\,,\quad n\in\mathbb{Z}\,.
$$
 % -------------- %
\end{theorem}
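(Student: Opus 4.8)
The plan is to follow the route used for the tetrahedron. The four component operators produced by the $T_4$-decomposition are those carrying the coupling conditions \eqref{eq:cube:et:a}--\eqref{eq:cube:et:b}, and the displayed secular equation is their joint characteristic function written through the single parameter $\omega_j=\mathrm e^{i\pi j/2}$, $j=0,1,2,3$. First I would record it in the factored form $k^2\sin k\cdot P_{\omega_j}(k)=0$, where
\[
  P_{\omega_j}(k)=k^4\omega_j^2\sin^2 k+k^2A_{\omega_j}(k)+B_{\omega_j}(k),
\]
with $A_{\omega_j}(k)=\omega_j^4+2\omega_j^3\cos k-6\omega_j^2\cos^2 k+2\omega_j\cos k+1$ and $B_{\omega_j}(k)=-\omega_j^4+6\omega_j^3\cos k-9\omega_j^2\cos^2 k-\omega_j^2+6\omega_j\cos k-1$, the two brackets of the secular equation. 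The zeros of the prefactor are $k=n\pi$, $n\in\mathbb Z$, i.e.\ the centres of the asserted intervals, so only the zeros of $P_{\omega_j}$ remain to be located.

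For that I would use nothing beyond crude size bounds. Since $|\omega_j|=1$ we have $|\omega_j^2|=1$ and, by the triangle inequality, $|A_{\omega_j}(k)|\le 12$ and $|B_{\omega_j}(k)|\le 24$ uniformly in $k\in\mathbb R$ and in $j$. Hence every zero $k>0$ of $P_{\omega_j}$ satisfies
\[
  \sin^2 k=-\frac{k^2A_{\omega_j}(k)+B_{\omega_j}(k)}{k^4\omega_j^2},\qquad |\sin k|^2\le\frac{12}{k^2}+\frac{24}{k^4},
\]
so that $|\sin k|\le\frac{2\sqrt3}{k}+\mathcal O(k^{-3})$. For $k$ large this forces $k$ to within $\frac{2\sqrt3}{k}+\mathcal O(1/k^2)$ of an integer multiple of $\pi$: pick the nearest multiple $n\pi$, so $|k-n\pi|\le\frac\pi2$, and invert $\sin$ using $\arcsin t=t+\mathcal O(t^3)$. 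Together with the series $k=n\pi$ coming from the prefactor, this is exactly the assertion of the theorem.

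I would add, as a remark that is not needed for the proof but clarifies the structure, that the half-angle substitutions $\cos k=1-2\sin^2\frac k2=2\cos^2\frac k2-1$ let one factor $P_{\omega_j}$ explicitly: for $\omega_j=\pm1$ it equals $4(k^2+3)\sin^2\frac k2\bigl[(k^2+3)\cos^2\frac k2-2\bigr]$, respectively $4(k^2+3)\cos^2\frac k2\bigl[(k^2+3)\sin^2\frac k2-2\bigr]$, while for $\omega_j=\pm i$ it equals $-\bigl[(k^2-1)^2-(k^2+3)^2\cos^2 k\bigr]$. From these one reads off the exact series $k=n\pi$ and a second series obeying the finer bound $|k-n\pi|\le\frac{2\sqrt2}{k}+\mathcal O(1/k^2)$; since $2\sqrt2<2\sqrt3$ this is consistent with, and in fact sharper than, the interval in the statement.

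The main obstacle will be the derivation of the secular equation itself, i.e.\ expanding the $6\times6$ determinant obtained by inserting the Ansatz $f_1=a\sin kx+b\cos kx$, $g_1=c\sin kx+d\cos kx$, $h_1=e\sin kx+f\cos kx$ into \eqref{eq:cube:et:a}--\eqref{eq:cube:et:b} and reducing it with $\omega_j^4=1$, exactly as was done for the tetrahedron. This algebra is long but entirely mechanical; once the equation is in the displayed form, the zero-counting above is immediate.
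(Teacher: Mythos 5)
Your argument is correct and is essentially the paper's own proof: after splitting off the $k^2\sin k$ prefactor (the $k=n\pi$ series), you bound the remaining factor by writing $\sin^2 k=-\bigl(k^2A_{\omega_j}+B_{\omega_j}\bigr)/(k^4\omega_j^2)$ and using the triangle-inequality bound $|A_{\omega_j}|\le 12$ to get $|\sin k|\le \tfrac{2\sqrt3}{k}+\mathcal O(k^{-2})$, exactly as in the paper. Your supplementary remark with the explicit half-angle factorizations (and the sharper $2\sqrt2/k$ bound) is a correct bonus but not needed for the statement.
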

 % -------------- %
\begin{proof}
First, there are eigenvalues given by $\sin{k} = 0$ which belong to the above intervals. The remaining ones are the roots of the equation
 % -------------- %
$$
  \sin^2{k}  + \frac{1}{k^2} (\omega_j^2+2\omega_j\cos{k}-6\cos^2{k}+2\omega_j^{-1}\cos{k}+\omega_j^{-2})+\mathcal{O}\left(\frac{1}{k^4}\right) = 0\,.
$$
 % -------------- %
The modulus of the parenthesis expression does not exceed $12$, from which we infer that $|\sin{k}|\leq \frac{2\sqrt{3}}{k} +\mathcal{O}\left(\frac{1}{k^2}\right)$, and the claim of the theorem follows.
\end{proof}

As in the tetrahedron case, the spectrum accumulates at high energies around the eigenvalues of the Dirichlet Laplacian on a unit-length interval.

%%%%%%%%%%%%%%%
\section{The octahedron}
%%%%%%%%%%%%%%%

 % -------------- %
\begin{figure}
\centering
\includegraphics[height=8cm]{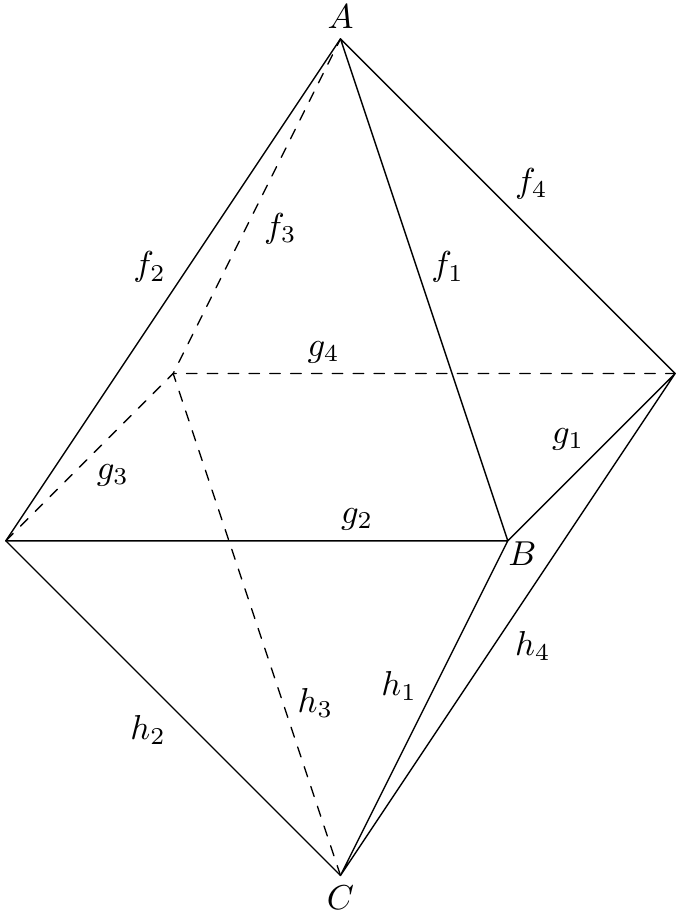}
\caption{An octahedron}
\label{fig5}
\end{figure}
 % -------------- %

Now we consider an octahedron with the wavefunction components labeled as in Figure~\ref{fig5}. There is a 4-fold axis going through vertices $A$ and $C$. The operator of rotation by $\pi/2$ clockwise, as seen from the exterior, of the tetrahedron at the vertex $A$ is $T_4$. In the eigensubspaces of $T_4$ it again holds that
 % -------------- %
\begin{eqnarray}
  f_2(x) = \omega_j f_1(x)\,,\quad f_3(x) = \omega_j^2 f_1(x)\,,\quad f_4(x) = \omega_j^3 f_1(x)\,,\label{eq:8sten:01}\\
  g_2(x) = \omega_j g_1(x)\,,\quad g_3(x) = \omega_j^2 g_1(x)\,,\quad g_4(x) = \omega_j^3 g_1(x)\,,\label{eq:8sten:02}\\
  h_2(x) = \omega_j h_1(x)\,,\quad h_3(x) = \omega_j^2 h_1(x)\,,\quad h_4(x) = \omega_j^3 h_1(x)\label{eq:8sten:03}
\end{eqnarray}
with $\omega_j = \mathrm{e}^{\frac{i\pi}{2}j}$, $j = 0, 1, 2, 3$.
 % -------------- %

%%%%%%%%%%%%%%%
\subsection{$\delta$-condition}
%%%%%%%%%%%%%%%
As before, we begin with the $\delta$-condition of the same strength $\alpha$ at all the vertices. At the vertex $A$ (corresponding to $x=0$ for $f_1$, $f_2$, $f_3$, and $f_4$) we have
 % -------------- %
$$
  f_1(0)=f_2(0) = f_3(0) = f_4(0)\,,\quad f_1'(0) + f_2'(0) + f_3'(0) + f_4'(0)  = \alpha f_1(0)\,.
$$
 % -------------- %
Using \eqref{eq:8sten:01} we obtain for $j=0$ the Robin condition
 % -------------- %
\begin{equation}
  f_1'(0)=\frac{\alpha}{4}f_1(0)\label{eq:8sten:delta:robina}
\end{equation}
 % -------------- %
and for $j=1,2,3$ the Dirichlet condition
 % -------------- %
\begin{equation}
  f_1(0)=0\,.\label{eq:8sten:delta:dirichleta}
\end{equation}
 % -------------- %
The situation at the vertex $C$ (referring to $x=1$ for $h_1$, $h_2$, $h_3$, and $h_4$) is similar. The coupling condition is
 % -------------- %
$$
  h_1(1)=h_2(1) = h_3(1)=h_4(1)\,,\quad -h_1'(1) -h_2'(1) -h_3'(1) -h_4'(1)  = \alpha h_1(1)\,.
$$
 % -------------- %
Using equation~\eqref{eq:8sten:03} we obtain for $j = 0$ the Robin condition
 % -------------- %
\begin{equation}
  -h_1'(1)=\frac{\alpha}{4}h_1(1) \label{eq:8sten:delta:robinc}
\end{equation}
 % -------------- %
and for $j = 1, 2, 3$ the Dirichlet condition
 % -------------- %
\begin{equation}
  h_1(0)=0\,.\label{eq:8sten:delta:dirichletc}
\end{equation}
 % -------------- %
At the vertex $B$ (corresponding to $x=1$ for $f_1$ and $g_2$, and to $x=0$ for $g_2$ and $h_1$) the coupling condition is
 % -------------- %
$$
  f_1(1) = g_1(0) = g_2(1) = h_1(0)\,,\quad -f_1'(1)+g_1'(0)-g_2'(1) + h_1'(0) = \alpha g_1(0)\,.
$$
 % -------------- %
Using now equation~\eqref{eq:8sten:02} we obtain
 % -------------- %
\begin{equation}
  f_1(1) = g_1(0) = \omega_j g_1(1) = h_1(0)\,,\quad -f_1'(1)+g_1'(0)-\omega_j g_1'(1) + h_1'(0) = \alpha g_1(0)\,.\label{eq:8sten:delta:b}
\end{equation}
 % -------------- %
Hence we have four component operators on the graph shown in Figure~\ref{fig6}, first one with Robin conditions~\eqref{eq:8sten:delta:robina} and \eqref{eq:8sten:delta:robinc} at the vertices $A$ and $C$ and the $\delta$-condition \eqref{eq:8sten:delta:b} with $\omega_0 = 1$ at $B$. The other three operators have Dirichlet condition at $A$ and $C$ (by equations~\eqref{eq:8sten:delta:dirichleta} and \eqref{eq:8sten:delta:dirichletc}) and the condition \eqref{eq:8sten:delta:b} at $B$ with $\omega_j = \mathrm{e}^{\frac{i\pi}{2}j}$, $j = 1, 2, 3$.

 % -------------- %
\begin{figure}
\centering
\includegraphics[width=0.8\textwidth]{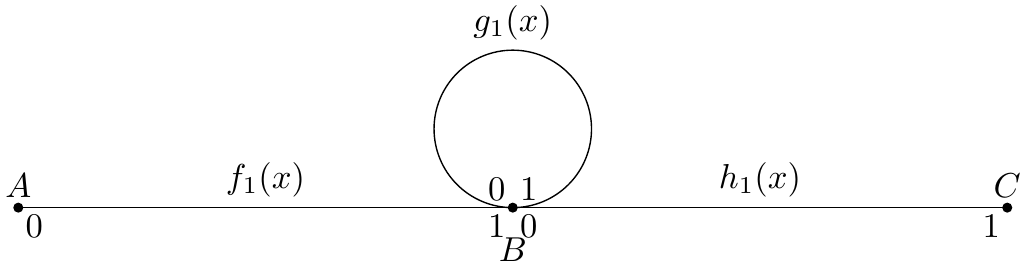}
\caption{The graph supporting component operators for the octahedron}
\label{fig6}
\end{figure}
 % -------------- %

First we address the situation with $j = 0$. We use the Ansatz
 % -------------- %
$$
  f_1(x) = a\sin{kx}+b\cos{kx}\,,\quad g_1(x) = c\sin{kx}+d\cos{kx}\,,\quad h_1(x) = e\sin{kx}+f\cos{kx}
$$
 % -------------- %
and obtain the equations
 % -------------- %
\begin{eqnarray*}
  ka = \frac{\alpha}{4} b\,,\\
  a\sin{k}+b\cos{k} = d = c\sin{k}+ d\cos{k} = f \,,\\
  -ak\cos{k}+bk \sin{k}+ck -ck\cos{k}+dk\sin{k}+ek = \alpha d\,,\\
  -ek\cos{k}+fk\sin{k} = \frac{\alpha}{4}e\sin{k}+\frac{\alpha}{4} f \cos{k}\,.
\end{eqnarray*}
 % -------------- %
The corresponding secular equation is
 % -------------- %
\begin{multline*}
 k^3(64\cos{k}\sin^2{k}-32\sin^2{k}+32\cos{k}+32)+k^2\alpha\sin{k}(48\sin^2{k}+16\cos{k}-40)+
\\
 +k\alpha^2\sin^2{k}(-12\cos{k}+2)-\alpha^3\sin^{3}k = 0\,.
\end{multline*}
 % -------------- %
It leads to the eigenvalues which approach $k = \frac{\pi}{2}+n\pi$ and $k = \pi+2n\pi$, $n\in \mathbb{Z}$.

Passing to $j = 1, 2, 3$, we employ the Ansatz
 % -------------- %
$$
  f_1(x) = a\sin{kx}\,,\quad g_1(x) = c\sin{kx}+d\cos{kx}\,,\quad h_1(x) = -e\cos{k}\sin{kx}+e\sin{k}\cos{kx}
$$
 % -------------- %
which leads to the set of equations
\begin{eqnarray*}
  a\sin{k} = d =\omega_jc\sin{k}+\omega_j d \cos{k} = e\sin{k}\,,\\
  -ak\cos{k}+ck-\omega_j ck\cos{k}+\omega_j dk \sin{k}-ek\cos{k} = \alpha d\,.
\end{eqnarray*}
 % -------------- %
The secular equation is in this case
 % -------------- %
$$
  k (-\omega_j^2+4\omega_j \cos{k}-1)\sin^{2}{k}+\alpha \omega_j\sin^{3}{k} = 0\,;
$$
 % -------------- %
from it we obtain the eigenvalues $k = n\pi$, $n\in\mathbb{Z}$, and the others that approach asymptotically $k = \pm \frac{2\pi}{3}+2n\pi$, $n\in\mathbb{Z}$ (for $\omega_j = -1$) and $k = \frac{\pi}{2}+n\pi$, $n\in\mathbb{Z}$ (for $\omega_j^2 = -1$); these values are again the exact solutions in the Kirchhoff case, $\alpha=0$.

%%%%%%%%%%%%%%%
\subsection{Preferred-orientation coupling}
%%%%%%%%%%%%%%%
In that case we have in the vertex $A$
 % -------------- %
$$
  \begin{pmatrix}-1& 1& 0 &0\\ 0 & -1 & 1 & 0\\ 0 & 0 & -1 & 1\\ 1 & 0 & 0 & -1\end{pmatrix} \begin{pmatrix}f_1(0)\\ f_2(0)\\ f_3(0)\\ f_4(0)\end{pmatrix} +
 i \begin{pmatrix}1& 1& 0 &0\\ 0 & 1 & 1 & 0\\ 0 & 0 & 1 & 1\\ 1 & 0 & 0 & 1\end{pmatrix} \begin{pmatrix}f_1'(0)\\ f_2'(0)\\ f_3'(0)\\ f_4'(0)\end{pmatrix} =0\,.
$$
 % -------------- %
Using \eqref{eq:8sten:01} we obtain for $j = 0$ the Neumann boundary condition
 % -------------- %
\begin{equation}
 f_1'(0) = 0\,, \label{eq:8sten:et:a0}
\end{equation}
 % -------------- %
and for $j = 1, 2, 3$ the Robin condition
 % -------------- %
$$
  f_1'(0) = i\frac{\omega_j-1}{\omega_j +1}f_1(0)\,,
$$
 % -------------- %
in particular,
 % -------------- %
\begin{eqnarray}
  f_1'(0) = -f_1(0)\,,\quad j=1\,,\label{eq:8sten:et:a1}\\
  f_1(0) = 0\,,\quad j=2\,,\label{eq:8sten:et:a2}\\
  f_1'(0) = f_1(0)\,,\quad j=3\,.\label{eq:8sten:et:a3}
\end{eqnarray}
 % -------------- %
Similarly, for the vertex $C$ we obtain the coupling condition
 % -------------- %
$$
  \begin{pmatrix}-1& 1& 0 &0\\ 0 & -1 & 1 & 0\\ 0 & 0 & -1 & 1\\ 1 & 0 & 0 & -1\end{pmatrix} \begin{pmatrix}h_4(1)\\ h_3(1)\\ h_2(1)\\ h_1(1)\end{pmatrix} +
 i \begin{pmatrix}1& 1& 0 &0\\ 0 & 1 & 1 & 0\\ 0 & 0 & 1 & 1\\ 1 & 0 & 0 & 1\end{pmatrix}\begin{pmatrix}-h_4'(1)\\ -h_3'(1)\\ -h_2'(1)\\ -h_1'(1)\end{pmatrix}  =0\,.
$$
 % -------------- %
Using \eqref{eq:8sten:03} we find
 % -------------- %
\begin{eqnarray}
  h_1'(1) = 0\,,\quad j=0\,,\label{eq:8sten:et:c0}\\
  h_1'(1) = -h_1(1)\,,\quad j=1\,,\label{eq:8sten:et:c1}\\
  h_1(1) = 0\,,\quad j=2\,,\label{eq:8sten:et:c2}\\
  h_1'(1) = h_1(1)\,,\quad j=3\,.\label{eq:8sten:et:c3}
\end{eqnarray}
 % -------------- %
Furthermore, at the vertex $B$ we have
 % -------------- %
$$
  \begin{pmatrix}-1& 1& 0 &0\\ 0 & -1 & 1 & 0\\ 0 & 0 & -1 & 1\\ 1 & 0 & 0 & -1\end{pmatrix} \begin{pmatrix}f_1(1)\\ g_1(0)\\ h_1(0)\\ g_2(1)\end{pmatrix} +
 i \begin{pmatrix}1& 1& 0 &0\\ 0 & 1 & 1 & 0\\ 0 & 0 & 1 & 1\\ 1 & 0 & 0 & 1\end{pmatrix}\begin{pmatrix}-f_1'(1)\\ g_1'(0)\\ h_1'(0)\\ -g_2'(1)\end{pmatrix}  =0\,,
$$
 % -------------- %
and using \eqref{eq:8sten:02} we get
 % -------------- %
\begin{equation}
  \begin{pmatrix}-1& 1& 0 &0\\ 0 & -1 & 1 & 0\\ 0 & 0 & -1 & 1\\ 1 & 0 & 0 & -1\end{pmatrix} \begin{pmatrix}f_1(1)\\ g_1(0)\\ h_1(0)\\\omega_j g_1(1)\end{pmatrix} +
 i \begin{pmatrix}1& 1& 0 &0\\ 0 & 1 & 1 & 0\\ 0 & 0 & 1 & 1\\ 1 & 0 & 0 & 1\end{pmatrix}\begin{pmatrix}-f_1'(1)\\ g_1'(0)\\ h_1'(0)\\ -\omega_j g_1'(1)\end{pmatrix} = 0\,.\label{eq:8sten:et:b}
\end{equation}
 % -------------- %
In this way we obtain four component operators, the first with Neumann boundary conditions at $A$ and $C$ (by eq.~\eqref{eq:8sten:et:a0} and \eqref{eq:8sten:et:c0}) and the coupling condition \eqref{eq:8sten:et:b} with $\omega_0 = 1$ at $B$. The second has Robin boundary conditions at $A$ and $C$ (eq.~\eqref{eq:8sten:et:a1} and \eqref{eq:8sten:et:c1}) and the coupling condition~\eqref{eq:8sten:et:b} with $\omega_1 = i$ at $B$. The third component opeartor has Dirichlet conditions at $A$ and $C$ (by eq.~\eqref{eq:8sten:et:a2} and \eqref{eq:8sten:et:c2}) and the coupling condition~\eqref{eq:8sten:et:b} with $\omega_2 = -1$ at $B$. Finally, the fourth operator has Robin boundary conditions at $A$ and $C$ (by eq.~\eqref{eq:8sten:et:a3} and \eqref{eq:8sten:et:c3}) and the coupling condition~\eqref{eq:8sten:et:b} with $\omega_3 = -i$ at $B$. Each of these operators is supported by the graph sketched in Figure~\ref{fig6}.

For the first component operator, $j=0$, we use the Ansatz
 % -------------- %
\begin{eqnarray*}
  f_1(x) = b\cos{kx}\,,\\
  g_1(x) = c\sin{kx}+d\cos{kx}\,,\\
  h_1(x) = e(\sin{k}\sin{kx}+\cos{k}\cos{kx})\,.
\end{eqnarray*}
 % -------------- %
The coupling conditions lead to the following set of equations,
 % -------------- %
\begin{eqnarray*}
  -b \cos{k}+d+ikb \sin{k}+ikc =0\,,\\
  -d+e\cos{k}+ikc+ike\sin{k} = 0\,,\\
  -e\cos{k}+c\sin{k}+d\cos{k}+ike\sin{k}-ikc\cos{k}+ikd\sin{k} = 0\,,\\
  b\cos{k}-c\sin{k}-d\cos{k}+ikb\sin{k}-ikc\cos{k}+ikd\sin{k} = 0\,.
\end{eqnarray*}
 % -------------- %
The respective secular equation then is
 % -------------- %
$$
 4ik [k^2(-1-2\cos{k})\sin^2{k}+(1-2\cos{k})\sin^2{k}+\cos{k}-1] = 0\,,
$$
 % -------------- %
which can be rewritten as
 % -------------- %
\begin{equation}
  k\sin^2{\frac{k}{2}}\left(4\cos^2{\frac{k}{2}}-1\right)\left[k^2\cos^2{\frac{k}{2}}+\cos^2{\frac{k}{2}}-\frac{1}{2}\right] = 0\,.\label{eq:8sten:et:seceq}
\end{equation}
 % -------------- %
The eigenvalues coming from $\sin^2{\frac{k}{2}}$ are $k = 2\pi n$, $n\in \mathbb{Z}$, the eigenvalues from $4\cos^2{\frac{k}{2}}-1$ are $k =\pm \frac{2\pi}{3}+4n\pi$ and $k =\pm \frac{4\pi}{3}+4n\pi$, $n\in \mathbb{Z}$, the remaining eigenvalues approach $k = \pi +2n\pi$, $n\in \mathbb{Z}$, as $k\to \infty$.

Passing to $j = 1$, we use the Ansatz
 % -------------- %
\begin{eqnarray*}
  f_1(x) = a(\sin{kx}-k\cos{kx})\,,\\
  g_1(x) = c\sin{kx}+d\cos{kx}\,,\\
  h_1(x) = e(-\cos{k}+k\sin{k})\sin{kx}+e(\sin{k}+k\cos{k})\cos{kx}\,.
\end{eqnarray*}
 % -------------- %
The coupling conditions then lead to
 % -------------- %
\begin{eqnarray*}
  a(-\sin{k}+k\cos{k})+d+ika(-\cos{k}-k\sin{k})+ikc=0\,,\\
  -d+e(\sin{k}+k\cos{k})+ikc+ike(-\cos{k}+k\sin{k}) = 0\,,\\
  -e(\sin{k}+k\cos{k})+ic\sin{k}+id\cos{k}+ike(-\cos{k}+k\sin{k})+ck\cos{k}-dk\sin{k} = 0\,,\\
  a(\sin{k}-k\cos{k})-ic\sin{k}-id\cos{k}-ika(\cos{k}+k\sin{k})+ck\cos{k}-dk\sin{k} = 0\,,
\end{eqnarray*}
 % -------------- %
and consequently, to the secular equation
 % -------------- %
$$
  8k[k^4(-\cos{k}\sin^2{k})+k^2(\cos{k}(1-2\sin^2{k})-1)-\cos{k}\sin^2{k}] = 0\,.
$$
 % -------------- %
which yields
 % -------------- %
$$
  k\sin^2{\frac{k}{2}}\left[2k^4\cos{k}\cos^2{\frac{k}{2}}+k^2\left(1+4\cos^2{\frac{k}{2}}\cos{k}\right)+2\cos{k}\cos^2{\frac{k}{2}}\right] = 0\,.
$$
 % -------------- %
The factor $\sin^2{\frac{k}{2}}$ leads to the eigenvalues $k = 2n\pi$, $n\in\mathbb{Z}$, the other eigenvalues approach $k = \frac{\pi}{2}+n\pi$ and $k = \pi + 2n\pi$, $n \in\mathbb{Z}$.

Similarly, for $j = 2$ we use the Ansatz
 % -------------- %
\begin{eqnarray*}
  f_1(x) = a\sin{kx}\,,\\
  g_1(x) = c\sin{kx}+d\cos{kx}\,,\\
  h_1(x) = e(\sin{k}\cos{kx}-\cos{k}\sin{kx})\,.
\end{eqnarray*}
 % -------------- %
The assumed coupling conditions give the set of equations
 % -------------- %
\begin{eqnarray*}
  -a\sin{k}+d-ika\cos{k}+ikc=0\,,\\
  -d+e\sin{k}+ikc-ike\cos{k} = 0\,,\\
  -e\sin{k}-c\sin{k}-d\cos{k}-ike\cos{k}+ikc\cos{k}-ikd\sin{k} = 0\,,\\
  a\sin{k}+c\sin{k}+d\cos{k}-ika\cos{k}+ikc\cos{k}-ikd\sin{k} = 0\
\end{eqnarray*}
 % -------------- %
for which the secular equation is
 % -------------- %
$$
  4ik^3[\cos{k}-1+\sin^2{k}(1-2\cos{k})]-4ik \sin^2{k}(1+2\cos{k})=0\,,
$$
 % -------------- %
which can be rewritten as
 % -------------- %
$$
  k \sin^2{\frac{k}{2}}\left(\cos^2{\frac{k}{2}-\frac{1}{4}}\right)\left[k^2\left(\cos{\frac{k}{2}}-\frac{\sqrt{2}}{2}\right)\left(\cos{\frac{k}{2}}+\frac{\sqrt{2}}{2}\right)+\cos^2{\frac{k}{2}}\right] = 0\,.
$$
 % -------------- %
The eigenvalues are $k = 2n\pi$, $k = \pm\frac{2\pi}{3}+4n\pi$ and $k = \pm\frac{4\pi}{3}+4n\pi$, $n\in\mathbb{Z}$; the remaining ones approach the points $k = \frac{\pi}{2}+n\pi$, $n\in\mathbb{Z}$, as $k\to \infty$.

Finally, for $j = 3$ the Ansatz is
 % -------------- %
\begin{eqnarray*}
  f_1(x) = a(\sin{kx}+k\cos{kx})\,,\\
  g_1(x) = c\sin{kx}+d\cos{kx}\,,\\
  h_1(x) = e(-\cos{k}-k\sin{k})\sin{kx}+e(\sin{k}-k\cos{k})\cos{kx}\,,
\end{eqnarray*}
 % -------------- %
the coupling condition yield the following set of equations,
 % -------------- %
\begin{eqnarray*}
  -a\sin{k}-ak\cos{k}+d-ika\cos{k}+ik^2a\sin{k}+ikc = 0\,,\\
  -d+e(\sin{k}-k\cos{k})+ikc+ike(-\cos{k}-k\sin{k}) = 0\,,\\
  -e(\sin{k}-k\cos{k})-ic\sin{k}-id\cos{k}+ike(-\cos{k}-k\sin{k})-ck\cos{k}+dk\sin{k} = 0\,,\\
  a\sin{k}+ak\cos{k}+ic\sin{k}+id\cos{k}-ika\cos{k}+ik^2a\sin{k}-ck\cos{k}+dk\sin{k} = 0\,,
\end{eqnarray*}
 % -------------- %
and the secular equation is
 % -------------- %
$$
  8k[k^4\cos{k}\sin^2{k}+k^2(-\cos{k}+1+2\cos{k}\sin^2{k})+\cos{k}\sin^2{k}] = 0\,,
$$
 % -------------- %
or alternatively
 % -------------- %
$$
  k\sin^2{\frac{k}{2}}\left[2 k^4 \cos^2{\frac{k}{2}}\cos{k}+k^2\left(1+4\cos^2{\frac{k}{2}}\cos{k}\right)+2\cos^2{\frac{k}{2}}\cos{k}\right] = 0\,.
$$
 % -------------- %
The eigenvalues coming from $\sin{\frac{k}{2}}$ are $k = 2n\pi$, $n\in\mathbb{Z}$, the other eigenvalues approach $k = \pi+2n\pi$ and $k = \frac{\pi}{2}+n\pi$, $n\in\mathbb{Z}$, as $k\to \infty$.

We can summarize the above results in the following theorem:

 % -------------- %
\begin{theorem}
The (square roots of) eigenvalues for the octahedron with the preferred-orientation coupling are at $k=2\pi n$ (with multiplicity eight) and $k = \pm \frac{2}{3}\pi+2\pi n$ (with multiplicity two) with $n\in \mathbb{Z}$; the other (square roots of) eigenvalues are situated in the intervals
 % -------------- %
\begin{eqnarray*}
  k \in \left[\pi +2\pi n-\frac{\sqrt{10}}{k}+\mathcal{O}\left(\frac{1}{k^2}\right), \pi +2\pi n+\frac{\sqrt{10}}{k}+\mathcal{O}\left(\frac{1}{k^2}\right)\right]\,,\\
  k \in \left[\frac{\pi}{2} +\pi n-\frac{5}{k^2}+\mathcal{O}\left(\frac{1}{k^4}\right), \frac{\pi}{2} +\pi n+\frac{5}{k^2}+\mathcal{O}\left(\frac{1}{k^4}\right)\right]\,.\\
\end{eqnarray*}
 % -------------- %
\end{theorem}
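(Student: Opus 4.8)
The proof rests on inspecting the four secular equations for the component operators $j=0,1,2,3$ on the graph of Figure~\ref{fig6} derived above. Using $2\cos^2\tfrac k2=1+\cos k$, each of them can be cast in the form $k\sin^2\tfrac k2\,R_j(k)=0$, where $R_0(k)=\bigl(4\cos^2\tfrac k2-1\bigr)\bigl[(k^2+1)\cos^2\tfrac k2-\tfrac12\bigr]$ (this is \eqref{eq:8sten:et:seceq}), $R_2(k)=\bigl(\cos^2\tfrac k2-\tfrac14\bigr)\bigl[(k^2+1)\cos^2\tfrac k2-\tfrac{k^2}{2}\bigr]$, and $R_1(k)=R_3(k)=2k^4\cos k\cos^2\tfrac k2+k^2\bigl(1+4\cos^2\tfrac k2\cos k\bigr)+2\cos k\cos^2\tfrac k2$. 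The eigenvalues of the octahedron operator are the zeros of $\prod_j k\sin^2\tfrac k2\,R_j(k)$, with multiplicities obtained by adding the contributions of the four components; I would read the four assertions of the theorem directly off these expressions.

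First I would isolate the exact eigenvalues. The factor $\sin^2\tfrac k2$ is common to the four components and has a double zero at each $k=2\pi n$; there $\cos^2\tfrac k2=1$, so none of $R_0,R_1,R_2,R_3$ vanishes (they equal $3(k^2+\tfrac12)$, $2(k^2+1)^2+k^2$ and $\tfrac34(\tfrac{k^2}{2}+1)$ respectively, all nonzero), whence $k=2\pi n$ is an eigenvalue of multiplicity $4\cdot 2=8$. Likewise, the factor $4\cos^2\tfrac k2-1$ of $R_0$ and the factor $\cos^2\tfrac k2-\tfrac14$ of $R_2$ each vanish simply exactly when $\cos\tfrac k2=\pm\tfrac12$, i.e. at $k=\pm\tfrac23\pi+2\pi n$; there $\sin^2\tfrac k2=\tfrac34\neq0$ and the bracketed parts of all four $R_j$ are nonzero for large $k$, so these are eigenvalues of multiplicity $1+1=2$. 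This settles the two exact series.

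The remaining eigenvalues are the large-$k$ zeros of the four bracketed factors, which I would localise by dividing by the highest power of $k$ and estimating the bounded trigonometric coefficients crudely. From $R_0$ one gets $\cos^2\tfrac k2=k^{-2}\bigl(\tfrac12-\cos^2\tfrac k2\bigr)=\mathcal{O}(k^{-2})$, hence $|\cos\tfrac k2|\le\tfrac1{\sqrt2\,k}+\mathcal{O}(k^{-3})$; writing $\cos\tfrac k2=\mp\sin\tfrac\delta2$ with $\delta:=k-\pi-2\pi n$ this forces $|\delta|\le\tfrac{\sqrt2}{k}+\mathcal{O}(k^{-2})$. From $R_2$ one gets $\cos^2\tfrac k2-\tfrac12=-k^{-2}\cos^2\tfrac k2=\mathcal{O}(k^{-2})$, and since $\tfrac{d}{dk}\cos^2\tfrac k2=-\tfrac12\sin k$ has modulus $\tfrac12$ at $k=\tfrac\pi2+\pi n$, this forces $k-\tfrac\pi2-\pi n=\mathcal{O}(k^{-2})$. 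For $j=1,3$, dividing $R_1=R_3$ by $2k^4$ and bounding $|1+4\cos^2\tfrac k2\cos k|\le5$ gives $|\cos k\cos^2\tfrac k2|\le\tfrac5{2k^2}+\mathcal{O}(k^{-6})$; since $\cos k\cos^2\tfrac k2=\cos k\cdot\tfrac{1+\cos k}{2}$ and the map $c\mapsto c(1+c)$ on $[-1,1]$ vanishes only at $c=0$ and $c=-1$ and is bounded away from $0$ off fixed neighbourhoods of them, for large $k$ either $\cos k=\mathcal{O}(k^{-2})$, in which case $\cos^2\tfrac k2=\tfrac12+\mathcal{O}(k^{-2})$ yields $|\cos k|\le\tfrac5{k^2}+\mathcal{O}(k^{-4})$ and so $|k-\tfrac\pi2-\pi n|\le\tfrac5{k^2}+\mathcal{O}(k^{-4})$, or $\cos k=-1+\mathcal{O}(k^{-2})$, in which case $\cos^2\tfrac k2\le\tfrac5{2k^2}+\mathcal{O}(k^{-4})$ yields $|\cos\tfrac k2|\le\tfrac{\sqrt{10}}{2k}+\mathcal{O}(k^{-3})$ and hence, again via $\cos\tfrac k2=\mp\sin\tfrac\delta2$, $|k-\pi-2\pi n|\le\tfrac{\sqrt{10}}{k}+\mathcal{O}(k^{-2})$. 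Assembling the four contributions, every non-exact eigenvalue lies in one of the two stated families of intervals, whose half-widths $\sqrt{10}/k$ and $5/k^2$ are forced by the $j=1,3$ component (those from $j=0$ and $j=2$ lying well inside).

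The genuine labour is the derivation and trigonometric simplification of the four secular equations, which is done above; given these, the asymptotic localisation is routine, the one point needing care being the identification of the accumulation points and constants, in particular that the clustering near $\pi+2\pi n$ sits on the $k^{-1}$ scale because it arises from extracting a square root of an $\mathcal{O}(k^{-2})$ quantity, whereas the clustering near $\tfrac\pi2+\pi n$ is linear in the small quantity and hence on the $k^{-2}$ scale. I would add one remark: the bracketed parts of $R_1=R_3$ in fact simplify exactly to $\cos k(1+\cos k)(k^2+1)^2+k^2$, so their zeros satisfy $\cos k\in\{-(k^2+1)^{-1},\,-1+(k^2+1)^{-1}\}$; this both confirms that those zeros accumulate only at $\tfrac\pi2+\pi n$ and $\pi+2\pi n$ (and nowhere else), guaranteeing that the intervals are non-empty and that the octahedron genuinely carries four distinct series, and shows that the intervals could be sharpened to half-widths $\sqrt2/k$ and $\mathcal{O}(k^{-2})$, a refinement not needed for the statement as given.
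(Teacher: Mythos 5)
Your argument is correct and follows essentially the same route as the paper: the exact series and their multiplicities are obtained by counting the explicit factors of the four component secular equations, and the remaining roots are localised by bounding $|\cos\frac{k}{2}|$, $|\cos k\,\cos^2\frac{k}{2}|$ and $|\cos^2\frac{k}{2}-\frac12|$ through the subleading coefficients, yielding the same $\sqrt{10}/k$ and $5/k^2$ half-widths from the $j=1,3$ components. Your version is in places slightly sharper than the paper's (the exact factorisation of the $j=1,3$ bracket and the $\sqrt2/k$ bound for $j=0$, versus the paper's cruder $\sqrt6/k$), but these refinements only tighten constants inside the stated intervals and do not change the approach.
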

 % -------------- %
\begin{proof}
Counting the contributions from the component operators one can establish the multiplicities of the eigenvalues with the exactly identified positions. It remains to prove that the (square roots of) the other eigenvalues are situated in the indicated intervals. Let us focus on the eigenvalues with $k \to \pi +2n\pi$, $n\in \mathbb{Z}$, first. For $j = 0$ we have from \eqref{eq:8sten:et:seceq}
 % -------------- %
$$
 \left|\cos{\frac{k}{2}}\right|\leq \sqrt{\frac{3}{2}}\frac{1}{k}\quad \Rightarrow\quad |k-\pi -2n\pi|\leq \frac{\sqrt{6}}{k}+\mathcal{O}\left(\frac{1}{k^2}\right)\,.
$$
 % -------------- %
Similarly, for $j = 1,3$ we get
 % -------------- %
$$
  \left|\cos{k}\cos^2{\frac{k}{2}}\right|\leq \frac{5}{2k^2}+\mathcal{O}\left(\frac{1}{k^4}\right) \quad \Rightarrow\quad |k-\pi -2n\pi|\leq \frac{\sqrt{10}}{k}+\mathcal{O}\left(\frac{1}{k^2}\right)\,.
$$
 % -------------- %
Let us now turn to the eigenvalues with $k\to \frac{\pi}{2}+n\pi$, $n\in \mathbb{Z}$. For $j = 1,3$ we have
 % -------------- %
$$
  \left|\cos{k}\cos^2{\frac{k}{2}}\right|\leq \frac{5}{2k^2}+\mathcal{O}\left(\frac{1}{k^4}\right) \quad \Rightarrow\quad |k-\frac{\pi}{2}-n\pi|\leq \frac{5}{k^2}+\mathcal{O}\left(\frac{1}{k^4}\right)\,,
$$
 % -------------- %
while for $j = 2$ we get
 % -------------- %
$$
  \left|\cos^2{\frac{k}{2}}-\frac{1}{2}\right|\leq \frac{1}{k^2}+\mathcal{O}\left(\frac{1}{k^4}\right) \quad \Rightarrow\quad |k-\frac{\pi}{2}-n\pi|\leq \frac{2}{k^2}+\mathcal{O}\left(\frac{1}{k^4}\right)\,,
$$
 % -------------- %
which concludes the proof.
\end{proof}

We see that the octahedron quantum graph with the preferred-orientation coupling differs from the previous (and the following) cases: its spectrum has at high energies an infinite component different from the one with the Dirichlet Laplacian asymptotics.

%%%%%%%%%%%%%%%
\section{The dodecahedron}
%%%%%%%%%%%%%%%

It remains to inspect the last two Platonic solids. The procedure would be the same as above so we could be briefer in explaining the argument. Let us first consider the dodecahedron. It has a symmetry given by the 5-fold axis going through centers of two opposite faces. The operator of a clockwise rotation by $\frac{2\pi}{5}$ is $T_5$, and we again employ its eigenvalues $\omega_j = \mathrm{e}^{\frac{2\pi i}{5}j}$, $j = 0,1,2,3,4$, together with the corresponding eigensubspaces to decompose the Hamiltonian. The part of the wavefunction components are showed in Figure~\ref{fig7} (the vertex $A$ belongs to the upper face and the vertex $D$ to the lower face).

 % -------------- %
\begin{figure}
\centering
\includegraphics[height=5cm]{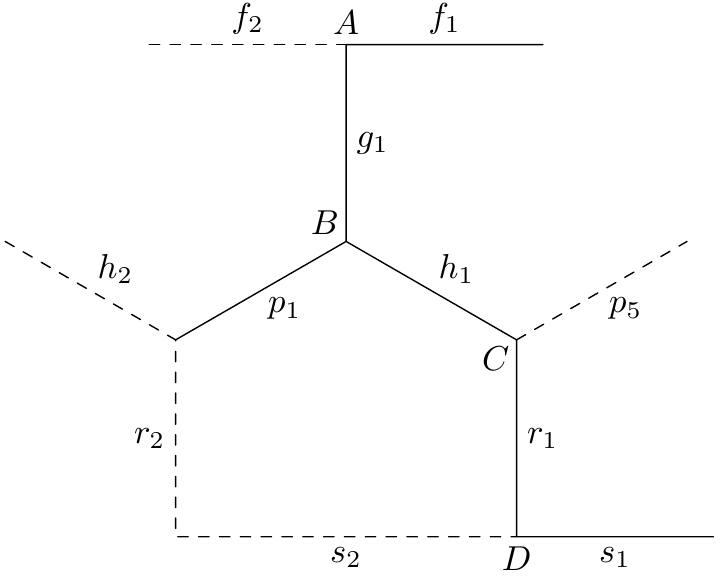}
\caption{A part of an dodecahedron}
\label{fig7}
\end{figure}
 % -------------- %

Similarly to the previous Platonic graphs, we have
 % -------------- %
\begin{eqnarray}
  f_2(x) = \omega_j f_1(x)\,,\quad g_2(x) = \omega_j g_1(x)\,,\quad h_2(x) = \omega_j h_1(x)\,,\quad p_2(x)=\omega_j p_1(x)\,,\nonumber\\
  r_2(x) = \omega_j r_1(x)\,,\quad s_2(x) = \omega_j s_1(x)\,,\quad p_5(x) = \omega_j^4 p_1(x)\,.\label{eq:12sten:01}
\end{eqnarray}
 % -------------- %

%%%%%%%%%%%%%%%
\subsection{$\delta$-condition}
%%%%%%%%%%%%%%%

Following the same pattern, we consider at the vertex $A$ (with $x=0$ for $f_1$ and $g_1$, $x=1$ for $f_2$) the conditions
 % -------------- %
$$
  f_1(0) = f_2(1) = g_1(0)\,,\quad f_1'(0)-f_2'(1)+g_1'(0) = \alpha f_1(0)\,.
$$
 % -------------- %
Using \eqref{eq:12sten:01} we get
 % -------------- %
\begin{equation}
  f_1(0) = \omega_j f_1(1) = g_1(0)\,,\quad f_1'(0)-\omega_j f_1'(1)+g_1'(0) = \alpha f_1(0)\,.\label{eq:12sten:delta:a}
\end{equation}
 % -------------- %
Furthermore, at the vertex $B$ (with $x=0$ for $h_1$ and $p_1$, $x=1$ for $g_1$) the coupling condition is
 % -------------- %
\begin{equation}
  g_1(1) = h_1(0) = p_1(0)\,,\quad -g_1'(1) + h_1'(0)+p_1'(0) = \alpha h_1(0)\,.\label{eq:12sten:delta:b}
\end{equation}
 % -------------- %
At the vertex $C$ (with $x=0$ for $r_1$, $x=1$ for $h_1$ and $p_5$) we obtain the condition
 % -------------- %
$$
  h_1(1) = p_5(1) = r_1(0)\,,\quad -h_1'(1)-p_5'(1)+r_1'(0) = \alpha r_1(0)\,.
$$
 % -------------- %
Using \eqref{eq:12sten:01} we can write
 % -------------- %
\begin{equation}
  h_1(1) = \omega_j^4 p_1(1) = r_1(0)\,,\quad -h_1'(1)-\omega_j^4 p_1'(1)+r_1'(0) = \alpha r_1(0)\,.\label{eq:12sten:delta:c}
\end{equation}
 % -------------- %
Next, at the vertex $D$ (with $x=0$ for $s_1$, $x=1$ for $r_1$ and $s_2$) the coupling is
 % -------------- %
$$
  r_1(1) = s_1(0) = s_2(1)\,,\quad -r_1'(1)+s_1'(0)-s_2'(1) = \alpha s_1(0)\,,
$$
 % -------------- %
and using \eqref{eq:12sten:01} we get
 % -------------- %
\begin{equation}
  r_1(1) = s_1(0) = \omega_j s_1(1)\,,\quad -r_1'(1)+s_1'(0)-\omega_j s_1'(1) = \alpha s_1(0)\,.\label{eq:12sten:delta:d}
\end{equation}
 % -------------- %
Hence we have five component operators supported by the graph shown in Figure~\ref{fig8}. At the vertex $A$ we have eq.~\eqref{eq:12sten:delta:a}, at $B$ eq.~\eqref{eq:12sten:delta:b}, at $C$ eq.~\eqref{eq:12sten:delta:c}, and at $D$ eq.~\eqref{eq:12sten:delta:d}, all with $\omega_j = \mathrm{e}^{\frac{2\pi i}{5}j}$, $j = 0,1,2,3,4$.

 % -------------- %
\begin{figure}
\centering
\includegraphics[width=\textwidth]{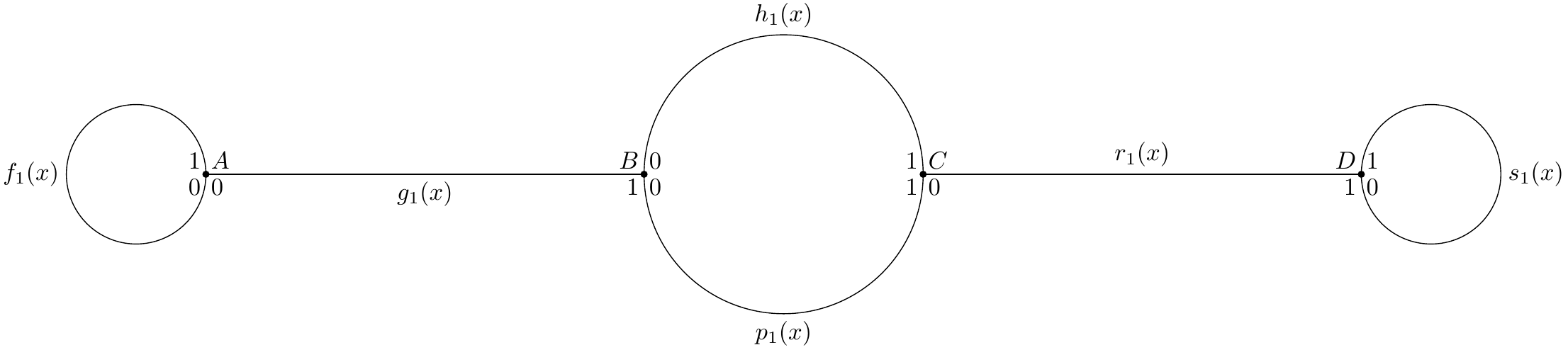}
\caption{The graph supporting component operators for the dodecahedron}
\label{fig8}
\end{figure}
 % -------------- %
The way to obtain the secular equation is the same as for the previous graphs leading to
 % -------------- %
\begin{multline*}
  k^4\sin^2{k}[-9\omega_j^4\sin^2{k}+6\omega_j^4\cos{k}-9\omega_j^3\sin^2{k}+54\omega_j^2\cos{k}\sin^2{k}+81\omega_j\sin^4{k}+6\omega_j^4+6\omega_j^3\cos{k}+9\omega_j^2\sin^2{k}+6\omega_j^3-
\\
-36\omega_j^2\cos{k}-144\omega_j\sin^2{k}+54\cos{k}\sin^2{k}-12\omega_j^2+12\omega_j\cos{k}+9\sin^2{k}+60\omega_j-36\cos{k}-12]
\\
+\alpha k^3\sin^3{k}[6\omega_j^4\cos{k}+2\omega_j^4+6\omega_j^3\cos{k}+54\omega_j^2\sin^2{k}-108\omega_j\cos{k}\sin^2{k}+2\omega_j^3-6\omega_j^2\cos{k}-48\omega_j^2
\\
+96\omega_j\cos{k}+54\sin^2{k}+4\omega_j-6\cos{k}-48]+\alpha^2k^2\sin^4{k}[\omega_j^4+\omega_j^3-18\omega_j^2\cos{k}-54\omega_j\sin^2{k}-\omega_j^2+52\omega_j
\\
-18\cos{k}-1]-2\alpha^3 k\sin^5{k}(\omega_j^2-6\omega_j+1)+\alpha^4\omega_j\sin^6{k} = 0\,.
\end{multline*}
 % -------------- %
Clearly there are eigenvalues with $k=n\pi$, $n\in\mathbb{Z}$, of multiplicity two. The leading term of the secular equation coming from coefficient of $k^4$ can be rewritten using $\sum_{s=0}^4 \omega_j^s = 0$ as
 % -------------- %
$$
  \sin^2{k}(3\cos{k}-1)(3\cos{k}+2)\cos{k}(3\omega_j\cos{k}-2\omega_j^2-\omega_j-2)\,,
$$
 % -------------- %
hence the eigenvalues approach in the $k$-plane the values with $\cos{k} = \frac{1}{3}$ ($k \approx \pm 1.231+2n\pi$, $n\in \mathbb{Z}$), $\cos{k} = -\frac{2}{3}$ ($k \approx \pm 2.301+2n\pi$, $n\in \mathbb{Z}$), $\cos{k} = 0$ ($k = \frac{\pi}{2}+n\pi$, $n\in\mathbb{Z}$), and $\cos{k} = \frac{1}{3}(1+4\mathrm{Re\,}\omega_j)$ (for $j=0$ we do not have a solution; for $j = 1$ and $j = 4$ we have $\cos{k} = \frac{\sqrt{5}}{3}$, i.e. $k \approx \pm 0.730+2n\pi$, $n\in\mathbb{Z}$; for $j = 2$ and $j = 3$ we have $\cos{k} = -\frac{\sqrt{5}}{3}$, i.e. $k \approx \pm 2.412+2n\pi$, $n\in\mathbb{Z}$).

%%%%%%%%%%%%%%%
\subsection{Preferred-orientation coupling condition}
%%%%%%%%%%%%%%%

In that case we have at $A$ the coupling condition
 % -------------- %
$$
  \begin{pmatrix}-1 & 1 & 0\\ 0 & -1& 1\\ 1 & 0 & -1 \end{pmatrix}\begin{pmatrix}f_1(0)\\g_1(0) \\ f_2(1)\end{pmatrix} + i \begin{pmatrix}1 & 1 & 0\\ 0 & 1& 1\\ 1 & 0 & 1 \end{pmatrix}\begin{pmatrix}f_1'(0)\\g_1'(0) \\ -f_2'(1)\end{pmatrix} =0\,.
$$
 % -------------- %
Using \eqref{eq:12sten:01} we get
 % -------------- %
\begin{equation}
  \begin{pmatrix}-1 & 1 & 0\\ 0 & -1& 1\\ 1 & 0 & -1 \end{pmatrix}\begin{pmatrix}f_1(0)\\g_1(0) \\ \omega_j f_1(1)\end{pmatrix} + i \begin{pmatrix}1 & 1 & 0\\ 0 & 1& 1\\ 1 & 0 & 1 \end{pmatrix}\begin{pmatrix}f_1'(0)\\g_1'(0) \\ -\omega_j f_1'(1)\end{pmatrix} =0\,.\label{eq:12sten:et:a}
\end{equation}
 % -------------- %
At the vertex $B$ the coupling condition reads
 % -------------- %
\begin{equation}
  \begin{pmatrix}-1 & 1 & 0\\ 0 & -1& 1\\ 1 & 0 & -1 \end{pmatrix}\begin{pmatrix}g_1(1)\\h_1(0) \\ p_1(0)\end{pmatrix} + i \begin{pmatrix}1 & 1 & 0\\ 0 & 1& 1\\ 1 & 0 & 1 \end{pmatrix}\begin{pmatrix}-g_1'(1)\\h_1'(0) \\ p_1'(0)\end{pmatrix} =0\,.\label{eq:12sten:et:b}
\end{equation}
 % -------------- %
Furthermore, at $C$ we have
 % -------------- %
$$
  \begin{pmatrix}-1 & 1 & 0\\ 0 & -1& 1\\ 1 & 0 & -1 \end{pmatrix}\begin{pmatrix}h_1(1)\\p_5(1) \\ r_1(0)\end{pmatrix} + i \begin{pmatrix}1 & 1 & 0\\ 0 & 1& 1\\ 1 & 0 & 1 \end{pmatrix}\begin{pmatrix}-h_1'(1)\\-p_5'(1) \\ r_1'(0)\end{pmatrix} =0\,;
$$
 % -------------- %
and using \eqref{eq:12sten:01} we find
 % -------------- %
\begin{equation}
  \begin{pmatrix}-1 & 1 & 0\\ 0 & -1& 1\\ 1 & 0 & -1 \end{pmatrix}\begin{pmatrix}h_1(1)\\\omega_j^4 p_1(1) \\ r_1(0)\end{pmatrix} + i \begin{pmatrix}1 & 1 & 0\\ 0 & 1& 1\\ 1 & 0 & 1 \end{pmatrix}\begin{pmatrix}-h_1'(1)\\-\omega_j^4 p_1'(1) \\ r_1'(0)\end{pmatrix} =0\,.\label{eq:12sten:et:c}
\end{equation}
 % -------------- %
Finally, at $D$ we have
 % -------------- %
$$
  \begin{pmatrix}-1 & 1 & 0\\ 0 & -1& 1\\ 1 & 0 & -1 \end{pmatrix}\begin{pmatrix}r_1(1)\\s_1(0) \\ s_2(1)\end{pmatrix} + i \begin{pmatrix}1 & 1 & 0\\ 0 & 1& 1\\ 1 & 0 & 1 \end{pmatrix}\begin{pmatrix}-r_1'(1)\\s_1'(0) \\ -s_2'(1)\end{pmatrix} =0\,,
$$
which in view of \eqref{eq:12sten:01} yields
 % -------------- %
\begin{equation}
  \begin{pmatrix}-1 & 1 & 0\\ 0 & -1& 1\\ 1 & 0 & -1 \end{pmatrix}\begin{pmatrix}r_1(1)\\s_1(0) \\ \omega_j s_1(1)\end{pmatrix} + i \begin{pmatrix}1 & 1 & 0\\ 0 & 1& 1\\ 1 & 0 & 1 \end{pmatrix}\begin{pmatrix}-r_1'(1)\\s_1'(0) \\ -\omega_j s_1'(1)\end{pmatrix} =0\,.\label{eq:12sten:et:d}
\end{equation}
 % -------------- %
The analysis then reduces to the investigation of five component operators supported by the graph in Figure~\ref{fig8} with the coupling conditions \eqref{eq:12sten:et:a}, \eqref{eq:12sten:et:b}, \eqref{eq:12sten:et:c} and \eqref{eq:12sten:et:d} and with $\omega_j = \mathrm{e}^{\frac{2\pi i}{5}j}$. The secular equation then takes the following form,
 % -------------- %
\begin{multline*}
  \omega_j k^6\sin^6{k}+k^4\sin^4{k}[\omega_j^4+\omega_j^3+2\omega_j^2\cos{k}-\omega_j^2-12\omega_j\cos^2{k}+2\cos{k}-1]
\\
+k^2\sin^2{k}[\omega_j^4(5\sin^2{k}-2\cos{k}-6)+\omega_j^3(5\sin^2{k}-2\cos{k}-6)+\omega_j^2(18\cos{k}\sin^2{k}-5\sin^2{k}-12\cos{k}+4)
\\
+\omega_j(54\sin^4{k}-88\sin^2{k}-4\cos{k}+36)+18\cos{k}\sin^2{k}-5\sin^2{k}-12\cos{k}+4]
\\+  \omega_j^4(3\sin^4{k}-4\cos{k}\sin^2{k})+\omega_j^3(3\sin^4{k}-4\cos{k}\sin^2{k})+\omega_j^2(54\cos{k}\sin^4{k}-3\sin^4{k}
\\
-48\cos{k}\sin^2{k}+8\sin^2{k}+8\cos{k}-8)+4\omega_j(27\sin^6{k}-51\sin^4{k}-2\cos{k}\sin^2{k}+28\sin^2{k}+4\cos{k}-4)
\\
+54\cos{k}\sin^4{k}-3\sin^4{k}-48\cos{k}\sin^2{k}+8\sin^2{k}+8\cos{k}-8 = \mathcal{O}(k^{-2})\,.
\end{multline*}
 % -------------- %

 % -------------- %
\begin{theorem}
For the dodecahedron with the preferred-orientation coupling the (square roots of) eigenvalues $k$ are for large $k$ situated in the intervals
 % -------------- %
$$
  k \in \left(n\pi -\frac{5.51}{k}+\mathcal{O}\left(\frac{1}{k^2}\right),n\pi+\frac{5.51}{k}+\mathcal{O}\left(\frac{1}{k^2}\right)\right)\,,\quad n\in\mathbb{Z}\,.
$$
 % -------------- %
\end{theorem}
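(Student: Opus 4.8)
The plan is to treat all five component operators simultaneously, taking the secular equation displayed above (valid with $\omega_j=\mathrm{e}^{2\pi i j/5}$ for the $j$-th component) as the starting point and repeating the estimate already used for the tetrahedron and the cube. Rewrite that equation as
$$
  \omega_j k^6\sin^6 k=-\bigl(k^4\sin^4 k\,B_1(k)+k^2\sin^2 k\,B_2(k)+B_3(k)\bigr)+\mathcal{O}(k^{-2})\,,
$$
where $B_1,B_2,B_3$ are the three bracketed expressions. The observation that makes this work is that each $B_m$ is a finite linear combination of functions of the form $\omega_j^s\cos^a k\,\sin^b k$, so that $|\omega_j|=1$ yields bounds $|B_m(k)|\le C_m$ uniform in $k$ and in $j$, with $C_m$ the sum of the moduli of the coefficients; reading these off from the displayed brackets gives $C_1\le 20$, $C_2\le 286$, $C_3\le 736$. (These are deliberately crude — near an eigenvalue $\sin k$ is small and $\cos k$ close to $\pm1$, which permits sharpening — but they already suffice.)

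Now let $k$ be an eigenvalue of $H$; it lies in the spectrum of one of the components, so the rewritten secular equation applies. Passing to absolute values, using $|\omega_j|=1$ and $|B_m|\le C_m$, gives
$$
  k^6|\sin k|^6\le C_1\,k^4|\sin k|^4+C_2\,k^2|\sin k|^2+C_3+\mathcal{O}(k^{-2})\,,
$$
and the substitution $u:=k|\sin k|\ge 0$ turns this into the $k$-independent inequality $u^6\le C_1u^4+C_2u^2+C_3+\mathcal{O}(k^{-2})$. The cubic $q(v)=v^3-C_1v^2-C_2v-C_3$ is negative at $v=0$, has exactly one positive zero $v_\ast$ (a single sign change among its coefficients), and is increasing for $v\ge v_\ast$; since the remainder on the right tends to zero, the inequality therefore forces $u^2\le v_\ast+o(1)$, hence $u<5.51$ for every sufficiently large eigenvalue once one checks numerically that $\sqrt{v_\ast}<5.51$ (with the constants above $\sqrt{v_\ast}\approx 5.50$). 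In other words $|\sin k|<5.51/k$.

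It remains to localize $k$. The bound $|\sin k|<5.51/k$ with $k$ large forces $k$ to lie within $\arcsin(5.51/k)$ of some integer multiple $n\pi$, and the expansion of $\arcsin$ gives $|k-n\pi|<5.51/k+\mathcal{O}(k^{-3})$, which is contained in the asserted interval; the eigenvalues for which $\sin k=0$ sit at $k=n\pi$ and lie there trivially. As the estimates are uniform in $j$, this covers the whole spectrum.

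The one genuinely laborious ingredient here is not the estimate but the step that precedes the statement, namely obtaining the secular equation: evaluating the coupling determinant on the component graph of Figure~\ref{fig8} and checking that its leading term is $\omega_j k^6\sin^6 k$ together with the three lower-order brackets as displayed. Granting that, the argument is elementary; the only points requiring attention are that the remainder in the secular equation be $\mathcal{O}(k^{-2})$ uniformly in $j$, and that $5.51$ be taken as a strict overestimate of $\sqrt{v_\ast}$, so that the final inclusion is safe.
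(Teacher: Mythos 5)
Your proof is correct and follows essentially the same route as the paper: both reduce the secular equation to a monic cubic in $y=k^2\sin^2 k$ with coefficient bounds $20$, $286$, $736+\mathcal{O}(k^{-2})$ and deduce $|k\sin k|\le 5.51+\mathcal{O}(k^{-2})$, hence the stated localization around $n\pi$. The only divergence is at the root-bounding step, and it is cosmetic: the paper first applies Fujiwara's bound and then iterates $|y|\le\sqrt[3]{|a_2|a^2+|a_1|a+|a_0|}$ to reach $|y|\le 30.3$, whereas you argue directly from the sign change of $v^3-20v^2-286v-736$ at its unique positive root $v_\ast$; both give $\sqrt{v_\ast}\approx 5.50<5.51$.
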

 % -------------- %
\begin{proof}
The problem reduces to the cubic equation $y^3+a_2 y^2+a_1 y+ a_0 = 0$ in the variable $y = k^2\sin^2{k}$ with the bounds on its coefficients $|a_2|\leq 20$, $|a_1|\leq 286$, and $|a_0|\leq 736 + \mathcal{O}(k^{-2})$. Using Fujiwaras's bound \cite{Mar} on the positions of the roots of such an equation we get
 % -------------- %
$$
  |y|\leq 2 \mathrm{max\,}\left(|a_2|,|a_1|^{1/2},\left|\frac{a_0}{2}\right|^{1/3}\right) = 40+\mathcal{O}(k^{-2})\,.
$$
 % -------------- %
Furthermore, we use repeatedly the following bound on the positions of the roots,
 % -------------- %
$$
  |y|\leq \sqrt[3]{|a_2| a^2 + |a_1| a + |a_0|}\,,
$$
 % -------------- %
where $a$ is the previously known bound, and in this way we obtain $|y|\leq 30.3+\mathcal{O}(k^{-2})$. Hence we get $|k\sin{k}|\leq 5.51 + \mathcal{O}(k^{-2})$, from which the claim of the theorem follows.
\end{proof}

%%%%%%%%%%%%%%%
\section{Icosahedron}
%%%%%%%%%%%%%%%

Finally, for an icosahedron the symmetry refers to the 5-fold axis going through two opposite vertices, $A$ and $D$; a part of the graph is shown in Figure~\ref{fig9}. The operator of a clockwise rotation by $\frac{2\pi}{5}$, as seen from the exterior of the vertex $A$, is denoted $T_5$ and its eigenvalues are $\omega_j = \mathrm{e}^{\frac{2\pi i}{5}j}$, $j=0,1,2,3,4$. The corresponding eigensubspaces give rise to component operators supported on the sketched graph.
 % -------------- %
\begin{figure}
\centering
\includegraphics[height=8cm]{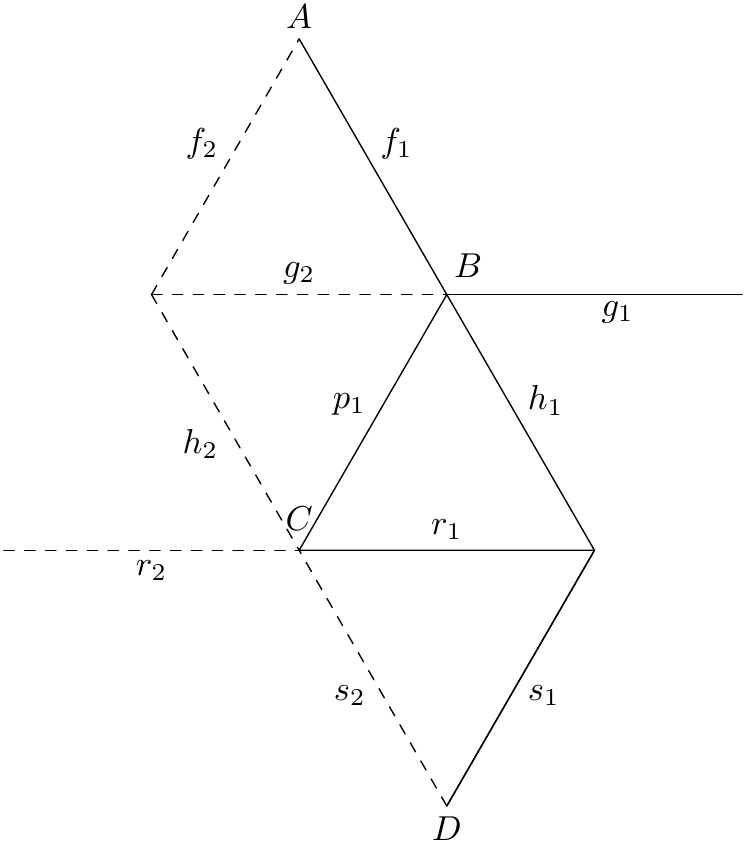}
\caption{A part of an icosahedron}
\label{fig9}
\end{figure}
 % -------------- %
We have
 % -------------- %
\begin{eqnarray}
  f_2(x) = \omega_j f_1(x)\,,\quad g_2(x) = \omega_j g_1(x)\,,\quad h_2(x) = \omega_j h_1(x)\,,\nonumber\\
  p_2(x) = \omega_j p_1(x)\,,\quad r_2(x) = \omega_j r_1(x)\,,\quad s_2(x) = \omega_j s_1(x)\,.\label{eq:20sten:01}
\end{eqnarray}
 % -------------- %

%%%%%%%%%%%%%%%
\subsection{$\delta$-condition}
%%%%%%%%%%%%%%%

At the vertex $A$ (with $x=0$ for $f_1$, $f_2$, $f_3$, $f_4$ and $f_5$) we have
 % -------------- %
$$
  f_1(0) = f_2(0) = f_3(0) = f_4(0) = f_5(0)\,,\quad f_1'(0)+f_2'(0)+f_3'(0)+f_4'(0)+f_5'(0) = \alpha f_1(0)\,.
$$
 % -------------- %
Using \eqref{eq:20sten:01} we find
 % -------------- %
\begin{equation}
  f_1'(0) = \frac{\alpha}{5}f_1(0)\,,\quad j=0\,,\label{eq:20sten:delta:a0}
\end{equation}
 % -------------- %
and
 % -------------- %
\begin{equation}
  f_1(0)=0\,,\quad j=1,2,3,4\,.\label{eq:20sten:delta:a1}
\end{equation}
 % -------------- %
At the vertex $B$ (corresponding to $x=0$ for $g_1$, $h_1$ and $p_1$, $x=1$ for $f_1$ and $g_2$) we have
 % -------------- %
$$
  f_1(1)=g_1(0) = g_2(1)=h_1(0) = p_1(0)\,,\quad -f_1'(1)+g_1'(0)-g_2'(1)+h_1'(0)+p_1'(0) = \alpha g_1(0)\,,
$$
 % -------------- %
which with the help of \eqref{eq:20sten:01} yields
 % -------------- %
\begin{equation}
  f_1(1)=g_1(0) = \omega_j g_1(1)=h_1(0) = p_1(0)\,,\quad -f_1'(1)+g_1'(0)-\omega_j g_1'(1)+h_1'(0)+p_1'(0) = \alpha g_1(0)\,.\label{eq:20sten:delta:b}
\end{equation}
 % -------------- %
Furthermore, at the vertex $C$ (with $x=0$ for $r_1$ and $s_2$, $x=1$ for $p_1$, $r_2$, and $h_2$) we get
 % -------------- %
$$
  p_1(1) = r_1(0) = s_2(0) = r_2(1) = h_2(1)\,,\quad -p_1'(1)+r_1'(0)+s_2'(0)-r_2'(1)-h_2'(1) = \alpha r_1(0)\,,
$$
 % -------------- %
so that \eqref{eq:20sten:01} leads to
 % -------------- %
\begin{equation}
  p_1(1) = r_1(0) = \omega_j s_1(0) =  \omega_j r_1(1) =  \omega_j h_1(1)\,,\quad -p_1'(1)+r_1'(0)+ \omega_j s_1'(0)- \omega_j r_1'(1)- \omega_j h_1'(1) = \alpha r_1(0)\,.\label{eq:20sten:delta:c}
\end{equation}
 % -------------- %
Finally, at $D$ (referring to $x = 1$ for $s_1$, $s_2$, $s_3$, $s_4$, and $s_5$) we can write
 % -------------- %
$$
  s_1(1) = s_2(1) = s_3(1) = s_4(1) = s_5(1)\,,\quad -s_1'(1)-s_2'(1)-s_3'(1)-s_4'(1)-s_5'(1)=\alpha s_1(1)\,,
$$
 % -------------- %
and using \eqref{eq:20sten:01} we arrive at
 % -------------- %
\begin{eqnarray}
  -s_1'(1) = \frac{\alpha}{5}s_1(1)\,,\quad j=0\,,\label{eq:20sten:delta:d0}\\
  s_1(1) = 0\,,\quad j=1,2,3,4\,.\label{eq:20sten:delta:d1}
\end{eqnarray}
 % -------------- %
We have five component operatots on the graph from Figure~\ref{fig10}. The first one has Robin conditions~\eqref{eq:20sten:delta:a0} and \eqref{eq:20sten:delta:d0} at the vertices $A$ and $D$, and the $\delta$-conditions \eqref{eq:20sten:delta:b} and \eqref{eq:20sten:delta:c} at $B$ and $C$, respectively, both with $\omega_j = 1$. The other component operators have Dirichlet conditions at $A$ and $D$~\eqref{eq:20sten:delta:a1}, and \eqref{eq:20sten:delta:d1} and the conditions \eqref{eq:20sten:delta:b} and \eqref{eq:20sten:delta:c} at $B$ and $C$ with $\omega_j = \mathrm{e}^{\frac{2\pi i}{5}j}$, $j = 1, 2, 3 , 4$.

 % -------------- %
\begin{figure}
\centering
\includegraphics[width=\textwidth]{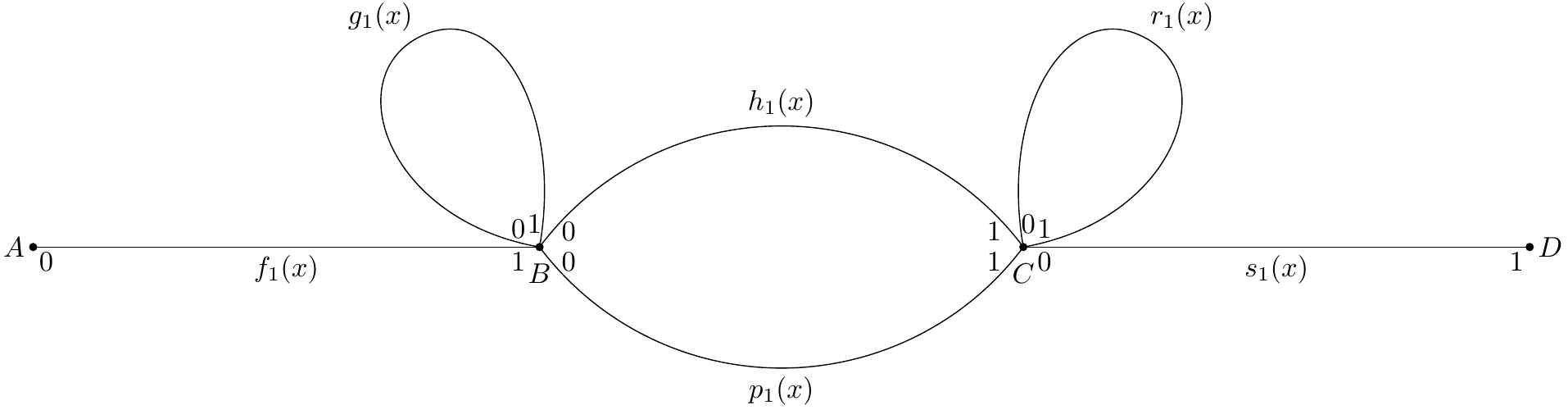}
\caption{The graph supporting component operators for the icosahedron}
\label{fig10}
\end{figure}
 % -------------- %
The secular equation for $j = 0$ is
 % -------------- %
\begin{multline*}
  k^4\sin^2{k} (-625 \sin^4{k}-500\cos{k}\sin^2{k}+1000\sin^2{k}+400\cos{k}-400)+
\\
+\alpha k^3\sin^3{k}(500\cos{k}\sin^2{k}-300\sin^2{k}-400\cos{k}+280)+
\\
+\alpha^2 k^2\sin^4{k}(150\sin^2{k}+60\cos{k}-140)+\alpha^3 k \sin^5{k}(-20\cos{k}+4)-\alpha^4\sin^6{k} = 0
\end{multline*}
 % -------------- %
giving obviously rise to the eigenvalues with $k = \pi n$, $n\in\mathbb{Z}$ of multiplicity two. The remaining part of the leading term can be written as an equation of order four in $\cos{k}$,
 % -------------- %
$$
  -25\cos^4{k}+20\cos^3{k}+10\cos^2{k}-4\cos{k}-1\,,
$$
 % -------------- %
which has four real roots $\cos{k} = 1$, which corresponds to $k = 2n\pi$, $n\in\mathbb{Z}$, $\cos{k} = -\frac{1}{5}$ correspondong to $k\approx \pm 1.772 +2n\pi$, $n\in\mathbb{Z}$,  $\cos{k} = \frac{\sqrt{5}}{5}$ referring to $k\approx \pm 1.107 +2n\pi$, $n\in\mathbb{Z}$, and finally, $\cos{k} = -\frac{\sqrt{5}}{5}$ which corresponds to $k\approx \pm 2.034 +2n\pi$, $n\in\mathbb{Z}$. The square roots of the eigenvalues of the system approach these values in the high-energy region.

For $j = 1, 2, 3, 4$ we obtain the secular equation of the form
 % -------------- %
\begin{multline*}
 k^2 \sin^4{k}(25\omega_j^4\cos^2{k}-10\omega_j^3\cos{k}-\omega_j^3+\omega_j^2+\omega_j-10\cos{k}-1) +
\\
+k\alpha \sin^5{k}(10\omega_j^4\cos{k}-2\omega_j^3-2)+\alpha^2\omega_j^4\sin^6{k}\,.
\end{multline*}
 % -------------- %
solved by the eigenvalues with $k = n\pi$, $n\in\mathbb{Z}$, of multiplicity four, and the other ones for which the leading term can be written as
 % -------------- %
$$
  (5\cos{k}+1)(5\omega_j^4\cos{k}-\omega_j^4-2\omega_j^3-2)\,,
$$
 % -------------- %
giving rise to the roots $\cos{k} = -\frac{1}{5}$ corresponding to $k\approx \pm 1.772 +2n\pi$, $n\in\mathbb{Z}$, and $\cos{k}= \frac{1}{5}(1+4\mathrm{Re\,}\omega_j)$, in particular for $j = 1,4$ the latter term is equal to $\cos{k} = \frac{\sqrt{5}}{5}$ referring to $k\approx \pm 1.107 +2n\pi$, $n\in\mathbb{Z}$, and for $j = 2,3$ we have $\cos{k} = -\frac{\sqrt{5}}{5}$, i.e. $k\approx \pm 2.034 +2n\pi$, $n\in\mathbb{Z}$. The (square roots of) the eigenvalues approach these values at high energies.

%%%%%%%%%%%%%%%
\subsection{Preferred-orientation coupling}
%%%%%%%%%%%%%%%

Finally, in this case we have at $A$
 % -------------- %
$$
  \begin{pmatrix}-1& 1 & 0 & 0 &0\\ 0 & -1 & 1 & 0 &0\\ 0 & 0 & -1 & 1 & 0\\ 0 & 0 & 0 & -1 & 1\\ 1 & 0 & 0 & 0 & -1\end{pmatrix} \begin{pmatrix}f_1(0)\\f_2(0)\\f_3(0)\\f_4(0)\\f_5(0)\end{pmatrix} + i  \begin{pmatrix}1& 1 & 0 & 0 &0\\ 0 & 1 & 1 & 0 &0\\ 0 & 0 & 1 & 1 & 0\\ 0 & 0 & 0 & 1 & 1\\ 1 & 0 & 0 & 0 & 1\end{pmatrix} \begin{pmatrix}f_1'(0)\\f_2'(0)\\f_3'(0)\\f_4'(0)\\f_5'(0)\end{pmatrix} = 0\,,
$$
 % -------------- %
so using \eqref{eq:20sten:01} we obtain
 % -------------- %
\begin{eqnarray}
  f_1'(0)= 0\,,\quad j=0\,,\label{eq:20sten:et:a0}\\
  f_1'(0) = -\sqrt{5-2\sqrt{5}}f_1(0)\,,\quad j=1\,,\label{eq:20sten:et:a1}\\
  f_1'(0) = -\sqrt{5+2\sqrt{5}}f_1(0)\,,\quad j=2\,,\label{eq:20sten:et:a2}\\
  f_1'(0) = \sqrt{5+2\sqrt{5}}f_1(0)\,,\quad j=3\,,\label{eq:20sten:et:a3}\\
  f_1'(0) = \sqrt{5-2\sqrt{5}}f_1(0)\,,\quad j=4\,.\label{eq:20sten:et:a4}
\end{eqnarray}
 % -------------- %
At the vertex $B$ the condition reads
 % -------------- %
$$
  \begin{pmatrix}-1& 1 & 0 & 0 &0\\ 0 & -1 & 1 & 0 &0\\ 0 & 0 & -1 & 1 & 0\\ 0 & 0 & 0 & -1 & 1\\ 1 & 0 & 0 & 0 & -1\end{pmatrix} \begin{pmatrix}f_1(1)\\g_1(0)\\h_1(0)\\p_1(0)\\g_2(1)\end{pmatrix} + i  \begin{pmatrix}1& 1 & 0 & 0 &0\\ 0 & 1 & 1 & 0 &0\\ 0 & 0 & 1 & 1 & 0\\ 0 & 0 & 0 & 1 & 1\\ 1 & 0 & 0 & 0 & 1\end{pmatrix} \begin{pmatrix}-f_1'(1)\\g_1'(0)\\h_1'(0)\\p_1'(0)\\-g_2'(1)\end{pmatrix} = 0\,,
$$
 % -------------- %
which by means of \eqref{eq:20sten:01} yields
 % -------------- %
\begin{equation}
  \begin{pmatrix}-1& 1 & 0 & 0 &0\\ 0 & -1 & 1 & 0 &0\\ 0 & 0 & -1 & 1 & 0\\ 0 & 0 & 0 & -1 & 1\\ 1 & 0 & 0 & 0 & -1\end{pmatrix} \begin{pmatrix}f_1(1)\\g_1(0)\\h_1(0)\\p_1(0)\\\omega_j g_1(1)\end{pmatrix} + i  \begin{pmatrix}1& 1 & 0 & 0 &0\\ 0 & 1 & 1 & 0 &0\\ 0 & 0 & 1 & 1 & 0\\ 0 & 0 & 0 & 1 & 1\\ 1 & 0 & 0 & 0 & 1\end{pmatrix} \begin{pmatrix}-f_1'(1)\\g_1'(0)\\h_1'(0)\\p_1'(0)\\-\omega_j g_1'(1)\end{pmatrix} = 0\,.\label{eq:20sten:et:b}
\end{equation}
 % -------------- %
Furthermore, the coupling condition at $C$ is
 % -------------- %
$$
  \begin{pmatrix}-1& 1 & 0 & 0 &0\\ 0 & -1 & 1 & 0 &0\\ 0 & 0 & -1 & 1 & 0\\ 0 & 0 & 0 & -1 & 1\\ 1 & 0 & 0 & 0 & -1\end{pmatrix} \begin{pmatrix}p_1(1)\\r_1(0)\\s_2(0)\\r_2(1)\\h_2(1)\end{pmatrix} + i  \begin{pmatrix}1& 1 & 0 & 0 &0\\ 0 & 1 & 1 & 0 &0\\ 0 & 0 & 1 & 1 & 0\\ 0 & 0 & 0 & 1 & 1\\ 1 & 0 & 0 & 0 & 1\end{pmatrix} \begin{pmatrix}-p_1'(1)\\r_1'(0)\\s_2'(0)\\-r_2'(1)\\-h_2'(1)\end{pmatrix} = 0\,,
$$
 % -------------- %
rewritten with the help of \eqref{eq:20sten:01} as
 % -------------- %
\begin{equation}
  \begin{pmatrix}-1& 1 & 0 & 0 &0\\ 0 & -1 & 1 & 0 &0\\ 0 & 0 & -1 & 1 & 0\\ 0 & 0 & 0 & -1 & 1\\ 1 & 0 & 0 & 0 & -1\end{pmatrix} \begin{pmatrix}p_1(1)\\r_1(0)\\\omega_j s_1(0)\\ \omega_j r_1(1)\\ \omega_j h_1(1)\end{pmatrix} + i  \begin{pmatrix}1& 1 & 0 & 0 &0\\ 0 & 1 & 1 & 0 &0\\ 0 & 0 & 1 & 1 & 0\\ 0 & 0 & 0 & 1 & 1\\ 1 & 0 & 0 & 0 & 1\end{pmatrix} \begin{pmatrix}-p_1'(1)\\r_1'(0)\\ \omega_j s_1'(0)\\-\omega_j r_1'(1)\\- \omega_j h_1'(1)\end{pmatrix} = 0\,.\label{eq:20sten:et:c}
\end{equation}
 % -------------- %
Finally, the coupling condition at $D$ is
 % -------------- %
$$
  \begin{pmatrix}-1& 1 & 0 & 0 &0\\ 0 & -1 & 1 & 0 &0\\ 0 & 0 & -1 & 1 & 0\\ 0 & 0 & 0 & -1 & 1\\ 1 & 0 & 0 & 0 & -1\end{pmatrix} \begin{pmatrix}s_5(1)\\s_4(1)\\s_3(1)\\s_2(1)\\s_1(1)\end{pmatrix} + i  \begin{pmatrix}1& 1 & 0 & 0 &0\\ 0 & 1 & 1 & 0 &0\\ 0 & 0 & 1 & 1 & 0\\ 0 & 0 & 0 & 1 & 1\\ 1 & 0 & 0 & 0 & 1\end{pmatrix} \begin{pmatrix}-s_5'(1)\\-s_4'(1)\\-s_3'(1)\\-s_2'(1)\\-s_1'(1)\end{pmatrix} = 0\,.
$$
 % -------------- %
and can be written using~\eqref{eq:20sten:01} as
 % -------------- %
\begin{eqnarray}
  s_1'(1)= 0\,,\quad j=0\,,\label{eq:20sten:et:d0}\\
  s_1'(1) = -\sqrt{5-2\sqrt{5}}\,s_1(1)\,,\quad j=1\,,\label{eq:20sten:et:d1}\\
  s_1'(1) = -\sqrt{5+2\sqrt{5}}\,s_1(1)\,,\quad j=2\,,\label{eq:20sten:et:d2}\\
  s_1'(1) = \sqrt{5+2\sqrt{5}}\,s_1(1)\,,\quad j=3\,,\label{eq:20sten:et:d3}\\
  s_1'(1) = \sqrt{5-2\sqrt{5}}\,s_1(1)\,,\quad j=4\,.\label{eq:20sten:et:d4}
\end{eqnarray}
 % -------------- %
We have five component operators on the graph of Figure~\ref{fig10}. The first one has Neumann coupling~\eqref{eq:20sten:et:a0} and~\eqref{eq:20sten:et:d0} at $A$ and $D$, and the coupling~\eqref{eq:20sten:et:b} and \eqref{eq:20sten:et:c} with $\omega_j = 1$ at $B$ and $C$, respectively. The other graphs have the Robin coupling at $A$ and $D$ (by eq.~\eqref{eq:20sten:et:a1}--\eqref{eq:20sten:et:a4} and eq.~\eqref{eq:20sten:et:d1}--\eqref{eq:20sten:et:d4}) and the coupling~\eqref{eq:20sten:et:b} and \eqref{eq:20sten:et:c} with $\omega_j = \mathrm{e}^{\frac{2\pi i}{5}j}$, $j=1,2,3,4$ at $B$ and $C$, respectively.

The secular equation is then of the form
 % -------------- %
\begin{multline*}
  -\omega_j^2 k^6\sin^6{k}+k^4\sin^4{k}[\omega_j^4(19\cos^2{k}+4\cos{k}+1)+2\omega_j^3(21\cos^2{k}+2\cos{k}+1)
\\
+\omega_j^2(19\cos^2{k}+4\cos{k}+1)+2\omega_j(\cos{k}-1)+2\cos{k}-2]+2k^2\sin^2{k}[-\omega_j^4(45\sin^4{k}-20\cos{k}\sin^2{k}
\\
-64\sin^2{k}-12\cos{k}+12)-2\omega_j^3(65\sin^4{k}-8\cos{k}\sin^2{k}-94\sin^2{k}-24\cos{k}+24)-\omega_j^2(45\sin^4{k}
\\
-20\cos{k}\sin^2{k}-64\sin^2{k}-12\cos{k}+12)+2\omega_j(4\cos{k}\sin^2{k}+\sin^2{k}-2\cos{k}+2)+2(4\cos{k}\sin^2{k}
\\
+\sin^2{k}-2\cos{k}+2)]+2[\omega_j^4(5\sin^6{k}+8\cos{k}\sin^4{k}-12\sin^4{k}-8\cos{k}\sin^2{k}+8\sin^2{k})
\\
-2\omega_j^3(105\sin^6{k}+14\cos{k}\sin^4{k}-176\sin^4{k}-80\cos{k}\sin^2{k}+96\sin^2{k}+32\cos{k}-32)
\\
+\omega_j^2(5\sin^6{k}+8\cos{k}\sin^4{k}-12\sin^4{k}-8\cos{k}\sin^2{k}+8\sin^2{k})
\\
-2\omega_j(7\cos{k}\sin^4{k}-4\cos{k}\sin^2{k}+4\sin^2{k})-14\cos{k}\sin^4{k}+8\cos{k}\sin^2{k}-8\sin^2{k}]=\mathcal{O}(k^{-2})\,.
\end{multline*}
 % -------------- %

 % -------------- %
\begin{theorem}
For the icosahedron with the preferred-orientation coupling the (square roots of) eigenvalues $k$ are for large $k$ placed in the intervals
 % -------------- %
$$
  k \in \left(n\pi -\frac{10.84}{k}+\mathcal{O}\left(\frac{1}{k^2}\right),n\pi+\frac{10.84}{k}+\mathcal{O}\left(\frac{1}{k^2}\right)\right)\,,\quad n\in\mathbb{Z}\,.
$$
 % -------------- %
\end{theorem}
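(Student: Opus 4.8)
The plan is to mimic the dodecahedron argument, only with larger constants. The displayed secular equation is, for each $j$, a polynomial identity in $k$, $\sin k$, $\cos k$ with $\omega_j = \mathrm{e}^{2\pi i j/5}$ entering as a parameter, and its term of highest order is $-\omega_j^2 k^6\sin^6 k$. Since $|-\omega_j^2|=1$, I would divide the identity by $-\omega_j^2$ and pass to the variable $y = k^2\sin^2 k$; using $k^4\sin^4 k = y^2$ and $k^6\sin^6 k = y^3$, and regarding the factors $\omega_j$, $\cos k$, $\sin k$ that remain inside the brackets simply as numbers of modulus at most one, one finds that at every eigenvalue $k$ the (nonnegative real) number $y$ satisfies a monic cubic
$$
  y^3 + a_2 y^2 + a_1 y + a_0 = 0\,,
$$
whose coefficients depend on $k$ but are uniformly bounded. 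The bounds follow by summing the moduli of the monomial coefficients of the respective brackets, with the $\mathcal{O}(k^{-2})$ remainder of the secular equation absorbed into $a_0$; this yields $|a_2|\le 104$, $|a_1|\le 1544$ and $|a_0|\le 2424 + \mathcal{O}(k^{-2})$.

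It then remains to bound the roots of this cubic. Fujiwara's estimate \cite{Mar} gives the first bound $|y|\le 2\max\bigl(|a_2|,\,|a_1|^{1/2},\,|a_0/2|^{1/3}\bigr)$, which I would sharpen by iterating the elementary self-improving inequality $|y|\le\sqrt[3]{|a_2|\,a^2+|a_1|\,a+|a_0|}$, with $a$ the bound obtained in the previous step. This recursion converges, and after finitely many steps it gives $|y|\le Y_0 + \mathcal{O}(k^{-2})$ with $\sqrt{Y_0}=10.84$ (numerically $Y_0\approx 117.5$). Since $y = k^2\sin^2 k\ge 0$, this reads $|k\sin k|\le 10.84 + \mathcal{O}(k^{-2})$, hence $|\sin k|\le \tfrac{10.84}{k}+\mathcal{O}(k^{-3})$ at every large eigenvalue (the eigenvalues with $\sin k = 0$ have $y = 0$ and are covered trivially).

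Finally, for $k$ large the right-hand side is small, so every eigenvalue lies within $\arcsin\bigl(\tfrac{10.84}{k}+\mathcal{O}(k^{-3})\bigr) = \tfrac{10.84}{k}+\mathcal{O}(k^{-3})$ of a multiple of $\pi$, which is precisely the assertion. The argument introduces nothing conceptually new beyond the dodecahedron case; the main effort — and the only place where errors could creep in — is the bookkeeping in the first step (checking that the long secular equation really reduces to a monic cubic in $y$ with coefficients bounded as stated, and that the discarded terms are genuinely $\mathcal{O}(k^{-2})$), together with carrying the iterative root estimate far enough to drop below $10.84^2$.
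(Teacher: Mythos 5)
Your proposal is correct and follows essentially the same route as the paper: the same substitution $y=k^2\sin^2 k$ after normalising by the unimodular leading coefficient $-\omega_j^2$, the same coefficient bounds $|a_2|\le 104$, $|a_1|\le 1544$, $|a_0|\le 2424+\mathcal{O}(k^{-2})$, and the same Fujiwara estimate sharpened by iterating $|y|\le\sqrt[3]{|a_2|a^2+|a_1|a+|a_0|}$ down to $|y|\lesssim 117.5$, i.e. $|k\sin k|\le 10.84+\mathcal{O}(k^{-2})$. The only differences are expository (you make explicit the division by $-\omega_j^2$ and the final $\arcsin$ step), so nothing further is needed.
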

 % -------------- %
\begin{proof}
The proof is similar to that for the dodecahedron. We also have the cubic equation $y^3+a_2 y^2+a_1 y+ a_0 = 0$ in the variable $y = k^2\sin^2{k}$ with the bounds on its coefficients $|a_2|\leq 104$, $|a_1|\leq 1544$, $|a_0|\leq 2424 + \mathcal{O}(1/k^2)$. Fujiwara's bound \cite{Mar} on the roots of this equation leads to $|y|\leq 208$. Applying repeatedly the bound
 % -------------- %
$$
  |y|\leq \sqrt[3]{|a_2| a^2 + |a_1| a + |a_0|}\,,
$$
 % -------------- %
where $a$ is the previously known bound, we obtain $|y|\leq 117.4+\mathcal{O}(1/k^2)$, and hence $|k\sin{k}|\leq 10.84+\mathcal{O}(1/k^2)$. From this the claim of the theorem follows.
\end{proof}

We see that for the dodecahedron and icosahedron with preferred-orientation coupling the high eigenvalues cluster again around those of the Dirichlet Laplacian on a unit length interval.

%%%%%%%%%%%%%%%
\section*{Acknowledgements}
%%%%%%%%%%%%%%%
The authors are obliged to Ond\v{r}ej Turek for a useful discussion. The research was supported by the Czech Science Foundation (GA\v{C}R) within the project No. 17-01706S. P.E. also acknowledges support of the EU project CZ.02.1.01/0.0/0.0/16\textunderscore 019/0000778, and J.L. of the research programme ``Mathematical Physics and Differential Geometry'' of the Faculty of Science of the University of Hradec Kr\'alov\'e.

\end{document}